\theoremstyle{plain}
\newtheorem{Thm}{Theorem}[section]
\newtheorem{Lem}[Thm]{Lemma}
\newtheorem{Prop}[Thm]{Proposition}
\newtheorem{Cor}[Thm]{Corollary}
\theoremstyle{definition}
\newtheorem{remark}[Thm]{Remark}
\numberwithin{equation}{section}
\newcommand{\suppress}[1]{}
\newcommand{\CC}{{\mathbb C}}
\newcommand{\RR}{{\mathbb R}}
\DeclareMathOperator{\res}{res}
\DeclareMathOperator{\Span}{span}
\newcommand\re{\Re}
\newcommand\dist{{\mathop{\mbox{\rm dist}}}}
\def\<{\langle}
\def\>{\rangle}
\begin{document}

\title{Zero-free regions for Dirichlet series (II)}

\author{C. Delaunay}
\author{E. Fricain}
\author{E. Mosaki}
\author{O. Robert}
\address{Christophe Delaunay, Universit\'e de Franche-Comt\'e; Laboratoire de Math\'ematiques de Besan\c con; CNRS UMR 6623; 16, route de Gray, F-25030 Besanon, France}
\email{delaunay@univ-fcomte.fr}
\address{Emmanuel Fricain, Universit\'e de Lyon; Universit\'e Lyon 1; Institut Camille Jordan CNRS UMR 5208; 43, boulevard du 11 Novembre 1918, F-69622 Villeurbanne}
\email{fricain@math.univ-lyon1.fr}
\address{Elie Mosaki, Universit\'e de Lyon; Universit\'e Lyon 1; Institut Camille Jordan CNRS UMR 5208; 43, boulevard du 11 Novembre 1918, F-69622 Villeurbanne}
\email{mosaki@math.univ-lyon1.fr}
\address{Olivier Robert, Universit\'e de Lyon; F-42023, Saint-Etienne, France; Universit\'e de Saint-Etienne; F-42000, Saint-Etienne, France; Laboratoire de math\'ematiques (LAMUSE, EA 3989); 23, rue du Dr P. Michelon, F-42000, Saint-Etienne France;}
\email{olivier.robert@univ-st-etienne.fr}
\date{}
\thanks{This work was supported by the ANR project no. 07-BLAN-0248 "ALGOL", the ANR project no. 09-BLAN-005801 "FRAB" and the ANR project no. 08-BLAN-0257 "PEPR".}

\keywords{Dirichlet series, Beurling--Nyman criterion, Hardy spaces, zeros of $L$-functions, Pascal matrix.}

\subjclass[2010]{11M26, 30H10}

\begin{abstract} 
In this paper, we continue some work devoted to explicit zero-free discs for a large class of  Dirichlet series. In a previous article, such zero-free regions were described using some spaces of functions which were defined with some technical conditions. Here we give two different natural ways in order to remove those technical conditions. In particular this allows to right down explicit zero-free regions differently and to obtain for them an easier description useful for direct applications.
\end{abstract}

\maketitle

\section{Introduction}
As usual, we denote by $\{t\}$ the fractional part of the real number $t$. We let $\mathcal B^\sharp$ be the closed subspace of $L^2(0,+\infty)$ spanned by functions of the form 
\begin{equation}\label{eq:function-beurling}
f \colon t \longmapsto \sum_{k=1}^{n}c_k \left\{\frac{\alpha_k}{t}\right\},
\end{equation}
where $c_k\in\CC$ and $0<\alpha_k\leq 1$ are restricted to the condition
\begin{equation}\label{eq:admissibilite-beurling}
\sum_{k=1}^n c_k\alpha_k=0.
\end{equation}
A. Beurling and B. Nyman (see \cite{Beurling}, \cite{Nyman}) proved that the Riemann zeta function does not vanish on the half-plane $\Re(s)>1/2$ if and only if 
$\chi_{(0,1)} \in \mathcal B^\sharp$, where $\chi_{(0,1)}$ is the characteristic function of the interval~$(0,1)$. Their result is known as the Beurling-Nyman criterion for the Riemann hypothesis. This theorem was extended by A. de Roton in \cite{anne-TAMS}  in the case of  $L$-functions in the Selberg class. In \cite{Nikolski-AIF}, N. Nikolski obtained an explicit version for the Beurling-Nyman's criterion in the case of the Riemann zeta function. Similarly,  in 
\cite{dfmr} an extended explicit version had been given for a large class of Dirichlet series (which include largely the Selberg class).
In all these previous works, some spaces of functions, generalizing ${\mathcal B}^\sharp$, have to be considered and their definitions involve several technical
conditions of the same type as \eqref{eq:admissibilite-beurling}. These conditions appear naturally in order to control the pole, coming from $L(s)$ at $s=1$, of some auxiliary functions. 
~\\
The fact is that these conditions are useless if we are  interested in an equivalent criterion for the (generalized) Riemann hypothesis.
Indeed, for the Riemann zeta function, it is proved  in \cite{Duarte3} that we can omit the condition \eqref{eq:admissibilite-beurling}: let ${\mathcal B}$ be the closed subspace of $L^2(0,+\infty)$ spanned by functions of the form
\begin{equation*}
f(t)=\sum_{k=1}^{n}c_k \left\{\frac{\alpha_k}{t}\right\},\qquad (t>0).
\end{equation*} 
Then the zeta function does not vanish on the half-plane $\Re(s)>1/2$ if and only if $\chi_{(0,1)} \in {\mathcal B}$. Furthermore, for $0<\lambda\leq 1$, if ${\mathcal B_\lambda}$ denotes the subspace of ${\mathcal B}$ formed by functions $f$ such that $\min_{1\leq k\leq n}\alpha_k\geq\lambda$, then the authors in \cite{Duarte3} also proved that there exists a constant $C>0$ such that 
\begin{equation}\label{liminf}
\liminf_{\lambda\to 0}d(\lambda)\sqrt{\log(1/\lambda)}\geq C,
\end{equation}
where $d(\lambda)$ denotes the distance between $\chi$ and ${\mathcal B_\lambda}$. Such results were also generalized in \cite{Anne-BSMF} and \cite{Anne-JNT} for the Selberg class.\bigskip
~\\
In this article, we explain how to  drop off the conditions of type~\eqref{eq:admissibilite-beurling}  used in \cite{dfmr}.  On the one hand, we give a Beurling-Nyman criterion of the same type of \cite{Duarte3} and \cite{Anne-BSMF} but for a wide class of Dirichlet series (we do not need any Euler product nor functional equation). And on the other hand, we also obtain explicit zero free regions of the same shape of \cite{dfmr} without the technical conditions. In particular, these give new explicit zero free regions that are easier to deal with. For these purposes we will give two different and independent (but complementary) methods.

\section{Notation} \label{section_2}


In this section, we will give some notation and recall some results that were obtained in \cite{dfmr} (we will refer to this article several times). 
For $r \in \RR$, we denote by $\Pi_r$ the half-plane 
$$
\Pi_r = \{ s \in \CC \; : \; \re(s)>r\}.
$$
We fix a Dirichlet series $L(s)=\sum_{n\geq 1}\frac{a_n}{n^s}$ satisfying the following conditions:
\begin{itemize}
\item For every $\varepsilon >0$, we have $a_n = O_\varepsilon(n^\varepsilon)$. 
\item There exists $\sigma_0<1$ such that the function $s \mapsto L(s)$ admits a meromorphic continuation to $\re (s)>\sigma_0$ with a unique pole of order $m_L$ at $s=1$.
\item The function $s \mapsto (s-1)^{m_L} L(s)$ is analytic with finite order in~$\Pi_{\sigma_0}$.
\end{itemize}
The growth condition on the coefficients $(a_n)_n$ implies that $L(s)$ is an absolutely convergent Dirichlet series for $\re(s)>1$. We also consider a function 
$\varphi:[0,+\infty[ \longrightarrow\CC$ such that 
\begin{itemize}
\item $\varphi$ is supported on $[0,1]$ and is locally bounded on $(0,1)$. 
 \item $\varphi(x)=O(x^{-\sigma_0})$ when $x\to 0$.
\item $\varphi(x)=O((1-x)^{-\sigma_1})$ when $x\to 1^-$, for some $\sigma_1<1/2$.
\end{itemize}
~\\
We recall that the (unnormalized) Mellin transform of a  Lebesgue-measurable function $\varphi:[0,+\infty[\to\CC$  is the function $\widehat\varphi$ defined by
$$
\widehat{\varphi} (s) = \int_0^{+\infty} \varphi(t) t^s \frac{dt}{t}  \qquad (s \in \CC),
$$ 
whenever the integral is absolutely convergent. If $\varphi$ satisfies the conditions above, we easily see that $s\longmapsto \hat\varphi(s)$ is analytic on $\Pi_{\sigma_0}$. The normalized Mellin transform  $\mathcal{M}:\varphi\mapsto \frac{1}{\sqrt{2\pi}} \widehat{\varphi}$ is a unitary operator that maps the space $ L^2_*\left((0,1), \frac{dt}{t^{1-2\sigma}}\right)$ onto    $H^2(\Pi_\sigma)$, where $ L^2_*\left((0,1), \frac{dt}{t^{1-2\sigma}}\right)$ is  the subspace of functions in $L^2\left((0,+\infty), \frac{dt}{t^{1-2\sigma}}\right)$  that vanish almost everywhere on $(1,+\infty)$, and     $H^2(\Pi_\sigma)$ is the Hardy space   of analytic functions $f$ : $\Pi_\sigma \rightarrow \CC$
such that $\Vert f \Vert_2 < \infty$ with
\begin{equation}\label{norm_H2}
\Vert f \Vert_2 = \sup_{x > \sigma} \left( \int_{-\infty}^{+\infty} \vert f(x +it)\vert^2 dt \right)^{\frac 12}.
\end{equation}
 We also recall that  $\mathcal M$  extends to a unitary operator from the space $L^2((0,+\infty),\frac{du}{u^{1-2\sigma}})$ onto $L^2(\sigma+i\RR)$ (use the Fourier--Plancherel's theorem and the change of variable going from the Fourier transform to the Mellin transform). 
With our choices of $L$ and $\varphi$ we define
\begin{equation}\label{psi}
\psi(u)=\res\left(L(s)\hat\varphi(s)u^s,s=1\right)-\sum_{n <u }a_n\varphi\left(\frac nu\right) \qquad (u\in\RR_+^*),
\end{equation}
where $\res(F(s),s=1)$ denotes the residue of the meromorphic function $F$ at $s=1$. We recall that by definition of $\psi$, there exists  $(p_0,p_1,\dots,p_{m_L-1})\in \CC^{m_L}$ with  $p_{m_L-1}\neq 0$ such that for $0<u<1$
\begin{equation}\label{psi-u-petit}
\psi(u)=u\sum_{k=0}^{m_L-1}p_k(\log u)^k\qquad (0 < u<1).
\end{equation}
Indeed, since the function $s\mapsto L(s)\hat\varphi(s)$ has a pole of order $m_L$ at $s=1$, we can write
\begin{equation}\label{eq:defnH}
L(s)\hat\varphi(s)=\sum_{k=0}^{m_L-1}\frac{k!p_k}{(s-1)^{k+1}}-H(s)\qquad (s\in\Pi_{\sigma_0}),
\end{equation}
with $p_{m_L-1}\neq 0$ and where $H$ is some analytic function in $\Pi_{\sigma_0}$. For each $0\leq k\leq m_L-1$, we have
\[
\hbox{res}\left(\frac{u^s}{(s-1)^{k+1}},s=1\right)=\frac{u(\log u)^{k}}{k!},
\]
which gives 
\begin{equation}\label{eq:res}
\hbox{res}\left(L(s)\hat\varphi(s)u^s,s=1\right)=u \sum_{k=0}^{m_L-1}p_k(\log u)^k.
\end{equation}
Now \eqref{psi-u-petit} follows from $\psi(u)=\hbox{res}\left(L(s)\hat\varphi(s)u^s,s=1\right)$ if $0<u<1$. 
~\\
Hence, it is clear that for $r >\sigma_0$, the function $t\mapsto t^{r-\sigma_0}\psi(\tfrac{1}{t})$ belongs to $ L^{2}\big((0,+\infty), \tfrac{dt}{t^{1-2\sigma_0}} \big)$ if and only if
\begin{equation}\label{psi-in-L2}
r<1\mbox{ and }\int_{1}^{+\infty}|\psi(t)|^2\frac{dt}{t^{1+2r}}<+\infty.
\end{equation}
By \cite[Theorem 2.1]{dfmr} this is equivalent to $r<1$ and the fact that the function $t\longmapsto L(r+it)\hat\varphi(r+it)$ belongs to $L^2(\RR)$.
In the classical examples such as the Selberg class, such a real number $r$ exists, and moreover, each $r'\in [r,1)$ also satisfies \eqref{psi-in-L2}.\bigskip
~\\
{\bf In the sequel, we assume that there exists  $r_0>\sigma_0$ satisfying~\eqref{psi-in-L2} and we fix $r_0$ once and for all.}\bigskip 
~\\
We set 
$$
\mathcal{S}:=\bigcup_{\ell\ge 1}(0,1]^{\ell}\times \CC^{\ell}.
$$
Each  $A\in \mathcal{S}$ is a couple $(\alpha,c)$ where $\alpha=(\alpha_1,\dots,\alpha_{\ell})\in (0,1]^{\ell} $ and $c=(c_1,\dots,c_{\ell})\in  \CC^{\ell}$ for some $\ell\ge 1$. That $\ell$ is called the \textsl{length of} $A$ and is noted $\ell(A)$.
~\\
A sequence $A=(\alpha,c)\in \mathcal{S}$ is called $m$-admissible\footnote{These are exactly the conditions we mentioned in the introduction.}  if
$$
\sum_{j=1}^{\ell(A)}c_j\alpha_j(\log \alpha_j)^{k} = 0 \mbox{ for any }0\le k\le m-1.
$$
We denote by $\mathcal{S}^{\sharp}$ the subset of the sequences $A\in \mathcal{S}$ that are $m_L$-admissible. To each  $A=(\alpha,c)\in \mathcal{S}$, we associate the function $f_{A,r}$ defined by
\begin{equation}\label{definition_fA}
f_{A,r}(t):=t^{r-\sigma_0}\sum_{j=1}^{\ell(A)}c_j\psi\left(\frac{\alpha_j}{t}\right)\qquad (t>0).
\end{equation}
Then for $r_0\leq r<1$ and for $A\in \mathcal{S}$, we have $f_{A,r}\in  L^{2}\big((0,+\infty), \tfrac{dt}{t^{1-2\sigma_0}} \big)$. Futhermore, if $A\in \mathcal{S}^\sharp$, then the function $f_{A,r}$ is identically zero on $(1,+\infty$) (see \cite[Theorem 4.3]{dfmr}).
\medskip
~\\
We set
\begin{equation}\label{Kr}
K_r:=\mathrm{Span}\{f_{A,r}\colon A \in \mathcal{S}\}\qquad (r_0\leq r<1)
\end{equation}
and
\begin{equation}\label{Kr-sharp}
K_r^{\sharp}:=\mathrm{Span}\{f_{A,r}\colon  A \in \mathcal{S}^{\sharp}    \}\qquad (r_0\leq r<1).
\end{equation}
Here the (closed) span are taken with respect to $L^{2}\big((0,+\infty), \tfrac{dt}{t^{1-2\sigma_0}} \big)$.
For $\lambda\in \Pi_{\sigma_0}$, we set
$$
w_{\lambda}(t):=t^{\overline{\lambda}-2\sigma_0}\chi_{(0,1)}(t)\qquad (t>0)
$$
and for $r_0\leq r < 1$  we let
\begin{equation}\label{definition_dr}
d_{r}(\lambda):=\mathrm{dist}(w_{\lambda}, K_{r}) \qquad \mbox{ and } \qquad d_{r}^{\sharp}(\lambda):=\mathrm{dist}(w_{\lambda}, K_{r}^{\sharp}).
\end{equation}
Since $K_r^{\sharp}\subset K_r$, it is immediate that 
\begin{equation}\label{eq1:dr-drsharp}
d_{r}(\lambda)\le d_{r}^{\sharp}(\lambda)\quad \big(r_0 \leq r <1,\thinspace \lambda\in \Pi_{\sigma_0}\big).
\end{equation}
We can now state Theorem 2.2 of \cite{dfmr}\footnote{The reader may be careful that in \cite{dfmr} the subspace $K_r^\sharp$ and the distance  $d_r^\sharp(\lambda)$ were denoted by $K_r$ and $d_r(\lambda)$.} :
\begin{Thm}\label{sans_zeros_avec_conditions}
Let $\lambda \in \Pi_{\sigma_0}$. Then the function $L$ does not vanish on $r-\sigma_0 + D^\sharp_r(\lambda)$ where
$$
D^\sharp_r(\lambda) := \left\{ \mu \in \CC \colon \left|\frac{\mu - \lambda}{\mu + \overline{\lambda}-2\sigma_0}\right| < \sqrt{1-2(\Re(\lambda) -\sigma_0){d_r^\sharp}^2(\lambda)} \right\} .
$$
\end{Thm}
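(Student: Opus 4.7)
The plan is to push the whole problem through the normalized Mellin transform and then apply the classical one-dimensional reproducing-kernel distance formula in $H^2(\Pi_{\sigma_0})$.

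First, I transfer the data to the Hardy space. By \cite[Theorem 4.3]{dfmr}, for every $A\in \mathcal{S}^{\sharp}$ the generator $f_{A,r}$ is supported in $(0,1)$, so $\mathcal{M}(K_r^{\sharp})\subset H^2(\Pi_{\sigma_0})$; since $w_\lambda$ lives in $L^2_*\!\left((0,1),\frac{dt}{t^{1-2\sigma_0}}\right)$ as soon as $\lambda\in \Pi_{\sigma_0}$, the same holds for $\mathcal{M}(w_\lambda)$. A direct integration of $w_\lambda$ yields
\begin{equation*}
\mathcal{M}(w_\lambda)(s)=\frac{1}{\sqrt{2\pi}\,(s+\overline{\lambda}-2\sigma_0)}\qquad (s\in \Pi_{\sigma_0}),
\end{equation*}
which is (up to a scalar) the reproducing kernel $k_\lambda$ of $H^2(\Pi_{\sigma_0})$. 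Recall that for $\mu\in \Pi_{\sigma_0}$, $k_\mu(s)=\frac{1}{2\pi(s+\overline{\mu}-2\sigma_0)}$ and $\|k_\mu\|^2=\frac{1}{4\pi(\re\mu-\sigma_0)}$.

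Next I compute $\widehat{f_{A,r}}$. The change of variables $v=\alpha_j/t$ together with the Perron-type identity \eqref{psi} defining $\psi$ (which is essentially what is done in the proof of \cite[Theorem 4.3]{dfmr}) gives, up to a harmless constant,
\begin{equation*}
\widehat{f_{A,r}}(s) = -L(s+r-\sigma_0)\,\hat\varphi(s+r-\sigma_0)\sum_{j=1}^{\ell(A)} c_j\,\alpha_j^{s+r-\sigma_0}\qquad (s\in \sigma_0+i\RR).
\end{equation*}
The apparent pole at $s+r-\sigma_0=1$ is killed by the $m_L$-admissibility of $A$, because the Dirichlet polynomial $z\mapsto \sum_j c_j\alpha_j^z$ vanishes to order $m_L$ at $z=1$. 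Consequently $\mathcal{M}(f_{A,r})$ is analytic on $\Pi_{\sigma_0}$ and, crucially, vanishes at every $\mu\in \Pi_{\sigma_0}$ such that $L(\mu+r-\sigma_0)=0$.

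Now I conclude. If $L$ has no zero on $\Pi_r$ there is nothing to prove; otherwise pick $\mu_0\in\Pi_r$ with $L(\mu_0)=0$ and set $\mu:=\mu_0-(r-\sigma_0)\in\Pi_{\sigma_0}$. By the previous step, $\mathcal{M}(K_r^{\sharp})$ is contained in the (closed) subspace $N_\mu:=\{F\in H^2(\Pi_{\sigma_0})\colon F(\mu)=0\}$, whose orthogonal complement is the line $\CC k_\mu$. Using the unitarity of $\mathcal{M}$ and the reproducing property,
\begin{equation*}
d_r^{\sharp}(\lambda)\;=\;\dist\!\big(\mathcal{M}(w_\lambda),\mathcal{M}(K_r^{\sharp})\big)\;\geq\;\dist(\mathcal{M}(w_\lambda),N_\mu)\;=\;\frac{|\mathcal{M}(w_\lambda)(\mu)|}{\|k_\mu\|}\;=\;\frac{\sqrt{2(\re\mu-\sigma_0)}}{|\mu+\overline{\lambda}-2\sigma_0|}.
\end{equation*}
Squaring and invoking the elementary identity $|\mu+\overline{\lambda}-2\sigma_0|^2-|\mu-\lambda|^2=4(\re\mu-\sigma_0)(\re\lambda-\sigma_0)$ rearranges this into
\begin{equation*}
\left|\frac{\mu-\lambda}{\mu+\overline{\lambda}-2\sigma_0}\right|^2 \;\geq\; 1-2(\re\lambda-\sigma_0)\,d_r^{\sharp}(\lambda)^2,
\end{equation*}
which is precisely the assertion $\mu\notin D_r^{\sharp}(\lambda)$, i.e.\ $\mu_0\notin r-\sigma_0+D_r^{\sharp}(\lambda)$.

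The main obstacle is the factorization step: rigorously justifying the Mellin transform formula for $f_{A,r}$ and verifying that $m_L$-admissibility is the exact condition removing the pole of $L\hat\varphi$ at $s+r-\sigma_0=1$ so as to land in $H^2(\Pi_{\sigma_0})$. Once this is in hand, the rest is a standard one-dimensional Hilbert-space computation with the reproducing kernel of $H^2(\Pi_{\sigma_0})$.
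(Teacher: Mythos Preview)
Your argument is correct and rests on exactly the ingredients the paper singles out as the ``key point'' behind this result: the Mellin transform carries $K_r^{\sharp}$ into $H^2(\Pi_{\sigma_0})$, the factorization $\widehat{f_{A,r}}(s)=-L(s+r-\sigma_0)\hat\varphi(s+r-\sigma_0)g_A(s+r-\sigma_0)$ holds with the pole at $s+r-\sigma_0=1$ removed by $m_L$-admissibility, and one finishes with a reproducing-kernel distance computation. The paper does not reprove Theorem~\ref{sans_zeros_avec_conditions} here (it is quoted from \cite{dfmr}), but its proof of the parallel Theorem~\ref{mauv_dist} shows the intended route: one writes the pseudo-hyperbolic radius as $R=\|P_{E_r}k_{\lambda,\sigma_0}\|_2/\|k_{\lambda,\sigma_0}\|_2$ with $E_r=\Span\{h_{A,r}\}$, and then uses $R^2=1-\dist^2(k_{\lambda,\sigma_0},E_r)/\|k_{\lambda,\sigma_0}\|_2^2$ together with the isometry $\mathcal{M}$ to identify $\dist(k_{\lambda,\sigma_0},E_r)$ with $d_r^{\sharp}(\lambda)/\sqrt{2\pi}$.

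Your presentation is the contrapositive of this: instead of maximizing $|h_{A,r}(\lambda)|/\|h_{A,r}\|$ over $A$, you fix a putative zero $\mu_0$, observe that $\mathcal{M}(K_r^{\sharp})\subset N_\mu=\{F:F(\mu)=0\}$, and bound $d_r^{\sharp}(\lambda)$ from below by the explicit one-line distance $\dist(\mathcal{M}(w_\lambda),N_\mu)=|\mathcal{M}(w_\lambda)(\mu)|/\|k_\mu\|$. The two computations are equivalent (projection onto $E_r$ versus projection onto the single kernel $k_\mu$), but yours avoids introducing the intermediate supremum and the space $E_r$, at the small cost of having to verify the algebraic identity $|\mu+\overline{\lambda}-2\sigma_0|^2-|\mu-\lambda|^2=4(\re\mu-\sigma_0)(\re\lambda-\sigma_0)$ by hand. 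Either way, the substantive step is the Hardy-space membership of $\mathcal{M}(f_{A,r})$, which in this paper is Lemma~\ref{Lem:mellin--transform-nnadmi} together with \cite[Theorem~4.3]{dfmr}.
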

\bigskip
~\\
It is also obtained the following result (see \cite[Theorem 2.4]{dfmr}):
\begin{Thm}\label{avec_conditions}
Suppose that the function $\hat\varphi$ does not vanish on the half-plane $\Pi_{r}$, that 
$\limsup_{x\to+\infty}\frac{\log|\hat\varphi(x+r-\sigma_0)|}{x} = 0$ and that $a_1 \neq 0$. Then the following assertions are equivalent:
\begin{enumerate}
 \item The function $L$  does not vanish on the half-plane $\Pi_{r}$.
\item There exists $\lambda\in\Pi_{\sigma_0}$ such that  $d_r^\sharp(\lambda)=0$.
\item For all $\lambda\in\Pi_{\sigma_0}$, we have $d_r^\sharp(\lambda)=0$.
\item We have $K^\sharp_r = L^2_*((0,1),dt/t^{1-2\sigma_0})$.
\end{enumerate}
\end{Thm}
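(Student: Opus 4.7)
The plan is to handle $(4)\Rightarrow(3)\Rightarrow(2)\Rightarrow(1)$ as a formal chain and to concentrate the real work on $(1)\Rightarrow(4)$. Indeed $(4)\Rightarrow(3)$ is immediate since every $w_\lambda$ lies in $L^2_*((0,1),dt/t^{1-2\sigma_0})$, $(3)\Rightarrow(2)$ is trivial, and $(2)\Rightarrow(1)$ follows from Theorem~\ref{sans_zeros_avec_conditions}: when $d_r^\sharp(\lambda_0)=0$ the radius in the definition of $D_r^\sharp(\lambda_0)$ equals $1$, so $D_r^\sharp(\lambda_0)=\Pi_{\sigma_0}$ and $L$ does not vanish on $r-\sigma_0+\Pi_{\sigma_0}=\Pi_r$.

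For $(1)\Rightarrow(4)$, I would pass to Hardy space via the unitary Mellin transform $\mathcal{M}:L^2_*((0,1),dt/t^{1-2\sigma_0})\to H^2(\Pi_{\sigma_0})$. Moving the contour in the Mellin inversion of $\sum_n a_n\varphi(n/u)$ from $\re s>1$ to $\re s=r$ and using the residue formula \eqref{eq:res} yields
\[
\psi(u) = -\frac{1}{2\pi i}\int_{(r)} L(w)\hat\varphi(w)u^w\,dw \qquad (u>0),
\]
from which a direct calculation gives
\[
\mathcal{M}(f_{A,r})(s) = -\frac{1}{\sqrt{2\pi}}\,L(s+r-\sigma_0)\,\hat\varphi(s+r-\sigma_0)\,P_A(s+r-\sigma_0),
\]
where $P_A(w):=\sum_j c_j\alpha_j^w$. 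The $m_L$-admissibility of $A\in\mathcal{S}^\sharp$ says precisely that $P_A$ vanishes to order $m_L$ at $w=1$, so it cancels the pole of $L$ at $s=1-r+\sigma_0$ and confirms $\mathcal{M}(f_{A,r})\in H^2(\Pi_{\sigma_0})$.

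Suppose for contradiction that $K_r^\sharp\neq L^2_*((0,1),dt/t^{1-2\sigma_0})$ and pick $h\in H^2(\Pi_{\sigma_0})\setminus\{0\}$ orthogonal to $\mathcal{M}(K_r^\sharp)$. The function
\[
F(\tau) := h(\sigma_0+i\tau)\,\overline{L(r+i\tau)\hat\varphi(r+i\tau)}
\]
lies in $L^1(\RR)$ by Cauchy--Schwarz combined with \eqref{psi-in-L2}. The orthogonality relations translate to
\[
\sum_j \overline{c_j}\,\alpha_j^{r}\,\widehat F(\log\alpha_j) = 0 \qquad \bigl(A=(\alpha,c)\in\mathcal{S}^\sharp\bigr).
\]
Fixing $m_L$ generic base points makes the matrix $\bigl(\alpha_j(\log\alpha_j)^k\bigr)_{0\le k<m_L,\,1\le j\le m_L}$ invertible (this is Vandermonde in $\log\alpha_j$ up to a diagonal factor); letting one further $\alpha$ vary over $(0,1]$ then forces the existence of a polynomial $Q$ of degree $<m_L$ with
\[
\widehat F(\eta) = e^{(1-r)\eta}\,Q(\eta)\qquad(\eta\le 0).
\]

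The main obstacle is to promote this half-line description of $\widehat F$ to $h\equiv 0$, which is where the three standing hypotheses intervene. Non-vanishing of $L$ on $\Pi_r$ together with non-vanishing of $\hat\varphi$ there makes $1/(L\hat\varphi)(\cdot+r-\sigma_0)$ analytic and zero-free on $\Pi_{\sigma_0}$ (after absorbing the order-$m_L$ pole/zero at $s=1-r+\sigma_0$); the growth bound $\limsup_{x\to\infty}\log|\hat\varphi(x+r-\sigma_0)|/x=0$ then controls this quotient enough that $h$ admits an $H^2$-factorisation matching the formula for $\widehat F$ on $\eta\le 0$. Finally, $a_1\ne 0$ (which fixes $L(s)\to a_1$ as $\re s\to+\infty$) rules out the residual obstruction via a Paley--Wiener-type argument on $\eta\ge 0$, forcing $Q\equiv 0$. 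Hence $\widehat F\equiv 0$ and $h\equiv 0$, contradicting our choice and establishing~(4).
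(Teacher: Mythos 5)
Your formal reductions are fine: $(4)\Rightarrow(3)\Rightarrow(2)$ is immediate, $(2)\Rightarrow(1)$ via Theorem~\ref{sans_zeros_avec_conditions} with radius $1$ is correct, your Mellin identity for $\mathcal{M}(f_{A,r})$ agrees with Lemma~\ref{Lem:mellin--transform-nnadmi}, and the duality/linear-algebra step producing a polynomial $Q$ of degree $<m_L$ with $\widehat{F}(\eta)=e^{(1-r)\eta}Q(\eta)$ for $\eta\le 0$ is sound. (Note that the present paper does not prove this theorem at all: it is quoted from \cite{dfmr}, the paper only indicating that the proof rests on the Mellin identity together with Hardy-space/reproducing-kernel theory, and, in a later remark, a Blaschke-factor duality trick for the analogous implication without admissibility.)

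The genuine gap is your final paragraph, which is exactly where the theorem lives. ``The growth bound then controls this quotient enough that $h$ admits an $H^2$-factorisation matching the formula for $\widehat{F}$ on $\eta\le 0$'' and ``$a_1\neq0$ rules out the residual obstruction via a Paley--Wiener-type argument, forcing $Q\equiv0$'' are declarations of intent, not arguments: you never specify the factorisation, nor why non-vanishing of $L\hat\varphi$ on the open half-plane (which gives no lower bound on $|L\hat\varphi|$ near $\Re s=r$ or as $\Re s\to+\infty$) permits dividing $F$ by $\overline{L\hat\varphi}$ within a Hardy class, nor how a possible singular inner factor is excluded. In the known proofs (Nyman, Nikolski, de Roton, \cite{dfmr}) this is the entire content of $(1)\Rightarrow(4)$: one invokes the Lax--Beurling description of the invariant subspace generated by the pole-compensated function $L\hat\varphi\cdot(\cdot)$ and must show it is \emph{outer}, i.e.\ has no Blaschke factor (non-vanishing of $L$ and $\hat\varphi$ on $\Pi_r$), no singular inner factor on the line $\Re s=r$ (analytic continuation across it, since $r>\sigma_0$), and no factor $e^{-cs}$ at infinity --- and it is precisely here, not in killing $Q$, that $\limsup_{x\to+\infty}\log|\hat\varphi(x+r-\sigma_0)|/x=0$ and $a_1\neq0$ (so that $L(\sigma)\to a_1\neq 0$) are used. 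None of this analysis is carried out in your sketch, so the implication $(1)\Rightarrow(4)$ is asserted rather than proved; everything before it is the routine translation, and the hard analytic core is missing.
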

~\\ 
This last theorem is exactly a Beurling-Nyman's criterion for $L$. The key point in the proof of these two results is the fact that    the Mellin transform of each $f_{A,r}\in K_r^{\sharp}$ is the product of $L(s)\hat\varphi(s)$ with a suitable function $g_A(s)$ that kills the pole at $s=1$. In that case, the function $L(s)\hat\varphi(s)g_A(s)$   belongs to the Hardy space $H^2(\Pi_r)$, and we may use the theory of analytic reproducing kernel Hilbert spaces.

\medskip
In this paper, we are interested with the following question: is it possible to replace the distance $d_r^\sharp(\lambda)$ by $d_r(\lambda)$ and the space $K_r^\sharp $ by $K_r$ in both previous results? Of course  if $m_L=0$, then $\mathcal S=\mathcal S^\sharp$ and $K_r=K_r^\sharp$ and there is nothing to do! So we assume in the following that $m_L\geq 1$. When we replace $K_r^\sharp$ by $K_r$, the pole at $s=1$ coming from the Dirichlet series is no longer compensated. In particular, for $A\in\mathcal S\setminus\mathcal S^\sharp$, the function $L(s)\hat \varphi(s)g_A(s)$ does not belong to the Hardy space $H^2(\Pi_r)$. There are two natural ideas to overcome this problem. 

First, for a function $f_{A,r}$ with $A\in \mathcal S$ we can find $A'\in \mathcal S$ such that  $f_{A,r} + f_{A',r} \in K_r^\sharp$ and such that $\Vert f_{A,r} +f_{A',r}-w_\lambda \Vert$ can be controlled by $\Vert f_{A,r}-w_\lambda \Vert$. This strategy is developed through sections \ref{sec:technical-lemma} and \ref{sec:link-distances}. That allows us to state in our main theorem that $d_r^{\sharp}(\lambda) \leq C d_r(\lambda)$ for some explicit constant $C$. With this inequality and  \eqref{eq1:dr-drsharp}, we may use directly the results of  \cite{dfmr}. In particular,  we obtain a Beurling-Nyman's criterion involving $d_r(\lambda)$ for our general class of Dirichlet series (generalizing the previous results of  \cite{Duarte3} and \cite{Anne-BSMF}) and as a by product we also obtain zero free discs (but that are less good than the one in \cite{dfmr}). 

For the second method, we show in Sections~\ref{sec:techniques-explicit} and \ref{sec:zero-free-discs} that we can compensate the pole at $s=1$ by multiplying the function $L(s)$ by a suitable function involving  a Blaschke factor so that the  new function is  in the Hardy space $H^2(\Pi_r)$. This enables us to follow the technics used in \cite{dfmr} to obtain explicit zero free discs. Those new zero free discs improve the ones in \cite{dfmr} and  are easier to describe. Nevertheless, the presence of the Blaschke factor causes some differences and brings some technical calculations; in particular, we must replace the function $w_\lambda$ by  another function $u_{r,\lambda}$ (which lies on $(0,+\infty)$). Then the zero free discs obtained are  expressed in terms of the distance of $u_{r,\lambda}$ to the space $K_r$.

\section{Auxiliary lemmas}\label{sec:technical-lemma}
\subsection{The Pascal matrix}
Let $m\geq 1$ be an integer. The Pascal matrix of size $m\times m$ is defined by 
\begin{equation}\label{Pascal-matrix}
A^{(m)}=(A_{i,j})_{0\leq i,j\leq m-1} \mbox{ with }A_{i,j}=\binom{i+j}{i}. 
\end{equation}
It is known that this is a positive definite symmetric matrix (see \cite[Section 28.4]{Hig}). Hence its greatest eigenvalue $\mu^{(m)}_{max}$ trivially satisfies $\mu^{(m)}_{max}\leq \mathrm{tr}(A^{(m)})$. Moreover, its characteristic polynomial $\chi_m(X)=\det(X I-A^{(m)})$ is palindromic, that is $\chi_m(X)=X^m \chi_m(1/X)$. Then, its lowest eigenvalue $\mu^{(m)}_{min}$ is equal to $1/\mu^{(m)}_{max}$. Moreover, using these two observations and the bound $\binom{2j}{j}\leq 4^j$ on the diagonal coefficients, we get the simple lower bound
\begin{equation}\label{eq:lbpascal}
\mu^{(m)}_{min} \geq \frac {3}{4^m-1}.
\end{equation}
It is also proved in \cite[Section 28.4]{Hig} that 
\[
\mu^{(m)}_{\max}\sim\mathrm{tr}(A^{(m)})\sim \frac{4^{m+1}}{3\sqrt{\pi m}},\qquad m\to +\infty,
\]
which gives the correct order of magnitude of $\mu^{(m)}_{\min}$ as $m$ tends to $+\infty$. For the first values of $m$, we have $\mu_{\min}^{(1)}=1$, $\mu_{\min}^{(2)}=(3-\sqrt{5})/2$, $\mu_{\min}^{(3)}=4-\sqrt{15}$, ... .

\begin{Lem}\label{lem:pascal}
For any $a>0$ and any  $(z_0,z_1,\dots,z_{m-1})\in\CC^{m}$, we have
\[
\int_{0}^{+\infty}\left|\sum_{j=0}^{m-1} z_j\frac{t^j}{j!} \right|^2 e^{-at}\,dt\geq \mu_m\sum_{j=0}^{m-1}\frac{1}{a^{2j+1}}|z_j|^2,
\]
where $\mu_m$ is the lowest eigenvalue of the Pascal matrix defined in \eqref{Pascal-matrix}.
\end{Lem}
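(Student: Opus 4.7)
The plan is to expand the left-hand side and recognize a quadratic form in the Pascal matrix. Expanding $|\cdot|^2$ and applying Fubini, we compute
\[
\int_{0}^{+\infty}\left|\sum_{j=0}^{m-1} z_j\frac{t^j}{j!} \right|^2 e^{-at}\,dt
= \sum_{i,j=0}^{m-1}\frac{z_i \overline{z_j}}{i!\,j!}\int_0^{+\infty} t^{i+j} e^{-at}\,dt,
\]
and using the classical identity $\int_0^{+\infty} t^{k} e^{-at}\,dt = k!/a^{k+1}$, each entry becomes $\binom{i+j}{i}/a^{i+j+1}$, so the integral equals
\[
\sum_{i,j=0}^{m-1} A_{i,j}\,\frac{z_i \overline{z_j}}{a^{i+j+1}},
\]
with $A^{(m)} = (A_{i,j})$ the Pascal matrix from \eqref{Pascal-matrix}.

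Next I would absorb the powers of $a$ by the substitution $w_j := z_j/a^{j+1/2}$, so that $z_i\overline{z_j}/a^{i+j+1} = w_i\overline{w_j}$. The integral then reads
\[
\langle A^{(m)} w, w\rangle_{\CC^m} \geq \mu_m \|w\|^2 = \mu_m \sum_{j=0}^{m-1}\frac{|z_j|^2}{a^{2j+1}},
\]
where the inequality is the standard Rayleigh lower bound for the Hermitian (in fact positive definite) matrix $A^{(m)}$ by its smallest eigenvalue $\mu_m$. This gives exactly the desired inequality.

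There is no real obstacle here: the only non-trivial ingredients are the evaluation of the Gamma-type integral and the recognition of the Pascal matrix, both of which are immediate. The rescaling $w_j = z_j/a^{j+1/2}$ is the only clever step, and it is forced by dimensional analysis (we need the quadratic form to become $a$-free so that a single eigenvalue bound applies uniformly in $a$).
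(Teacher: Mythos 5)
Your proof is correct and follows essentially the same route as the paper: expand the square, evaluate the Gamma-type integrals to recognize the Pascal matrix, and apply the smallest-eigenvalue (Rayleigh) bound for the positive definite Hermitian form. The only cosmetic difference is that the paper first reduces to $a=1$ by rescaling $z_j\mapsto z_j/a^j$ inside the integral, which is exactly equivalent to your substitution $w_j=z_j/a^{j+1/2}$.
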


\begin{proof}
We restrict the proof to the case $a=1$ since the general case follows from  
\[
\int_{0}^{+\infty}\left|\sum_{j=0}^{m-1} z_j\frac{t^j}{j!} \right|^2 e^{-at}\,dt=\frac{1}{a}\int_{0}^{+\infty}\left|\sum_{j=0}^{m-1} \frac{z_j}{a^j}\frac{t^j}{j!} \right|^2 e^{-t}\,dt.
\]
Expanding the integral and using the identity $\int_0^{+\infty} t^ne^{-t}\,dt=n!$, we have

\[
\int_{0}^{+\infty}\left|\sum_{j=0}^{m-1} z_j\frac{t^j}{j!} \right|^2 e^{-t}\,dt=\sum_{0\leq i,j\leq m-1}\overline{z_i}z_j\frac{(i+j)!}{i!j!}={}^t\bar Z A^{(m)}Z,
\] 
where $Z$ is the column vector ${}^t(z_0,z_1,\dots,z_{m-1})$ and $A^{(m)}$ is the Pascal matrix defined in \eqref{Pascal-matrix}. It remains to note that if $A$ is a hermitian positive definite matrix, then ${}^t\bar Z A Z\geq \mu \sum_{j=0}^{m-1} |z_j|^2$, where $\mu$ is the lowest eigenvalue of~$A$.
\end{proof}
\begin{remark} 
Taking ${}^t(z_0,z_1,\dots,z_{m-1})$ to be an eigenvector for the smallest eigenvalue, we see that the lower bound in the lemma is optimal.
\end{remark}

\subsection{A linear system}

\begin{Lem} \label{syst-triangle}
Let $m\ge 1$, let  $P=(p_0,p_1,\dots,p_{m-1})\in \CC^m$ with $p_{m-1}\neq 0$, and let $\beta=(\beta_0,\dots, \beta_{m-1})\in \CC^{m}$. Then the system
$$
\beta_k=\sum_{i=0}^k\binom{i+m-1-k}{i}p_{i+m-1-k}\thinspace y_i \quad (0\leq k \leq m-1)
$$
of unknown $y=(y_0,\dots,y_{m-1})$ has a unique solution in $\CC^m$ and for such a solution we have
$$
\max_{0\leq k\leq m-1}|y_k|\leq \xi(P)\sum_{k=0}^{m-1}|\beta_k|,
$$
where 
$$
\xi(P)=\begin{cases}
\frac{1}{|p_0|}&\hbox{if }m=1 \\
\frac{1}{|p_{m-1}|}\left(1+\frac{\|P\|_\infty}{|p_{m-1}|} \displaystyle\frac{\left(\frac{m\|P\|_\infty}{|p_{m-1}|}\right)^{m-2}-1}{m\frac{\|P\|_\infty}{|p_{m-1}|}-1}\right)&\hbox{if }m\geq 2,
\end{cases}
$$
and $\|P\|_\infty=\max_ {0\leq i\leq m-1}|p_i|$. 
\end{Lem}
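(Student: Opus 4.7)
My plan is to exploit the lower-triangular structure of the system. Writing out the $k$-th equation, the coefficient of $y_k$ (the $i=k$ term) is $\binom{k+m-1-k}{k}p_{m-1}=\binom{m-1}{k}p_{m-1}$, which is nonzero since $p_{m-1}\neq 0$; every remaining term involves only $y_0,\dots,y_{k-1}$. This already proves existence and uniqueness via forward substitution, and yields the explicit recursion
\[
y_k=\frac{1}{\binom{m-1}{k}p_{m-1}}\left(\beta_k-\sum_{i=0}^{k-1}\binom{i+m-1-k}{i}p_{i+m-1-k}y_i\right).
\]

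For the quantitative bound, the case $m=1$ reduces immediately to $y_0=\beta_0/p_0$. For $m\ge 2$ I would set $\rho:=\|P\|_\infty/|p_{m-1}|$ and $B:=\sum_{j=0}^{m-1}|\beta_j|$, then use $|p_{i+m-1-k}|\le \|P\|_\infty$ together with the hockey-stick identity
\[
\sum_{i=0}^{k-1}\binom{i+m-1-k}{i}=\sum_{i=0}^{k-1}\binom{i+m-1-k}{m-1-k}=\binom{m-1}{k-1}
\]
(reindexing $j=i+m-1-k$ and summing $\binom{j}{m-1-k}$ for $j$ running over $m-1-k,\dots,m-2$). Combined with $\binom{m-1}{k-1}/\binom{m-1}{k}=k/(m-k)$, the recursion gives
\[
|y_k|\le \frac{|\beta_k|}{\binom{m-1}{k}|p_{m-1}|}+\frac{k\rho}{m-k}\,\max_{j<k}|y_j|,
\]
which, using the crude but sufficient bound $k/(m-k)\le m$, becomes the first-order linear recurrence $M_k\le |\beta_k|/|p_{m-1}|+m\rho\, M_{k-1}$ for $M_k:=\max_{j\le k}|y_j|$.

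Unwinding this recurrence from $M_0=|\beta_0|/|p_{m-1}|$ and replacing each $|\beta_j|$ by $B$ produces a geometric progression in the ratio $m\rho$, whose closed form reproduces the factor $\frac{(m\rho)^{m-2}-1}{m\rho-1}$ appearing in $\xi(P)$; the extra factor of $\rho$ outside this sum comes from separating the very first iteration (where the binomial ratio equals $1/(m-1)$ rather than $m$), while the leading $1$ in $\xi(P)$ accounts for the base case $k=0$. I expect the main obstacle to be purely bookkeeping: the \emph{existence of} the bound is immediate from triangularity, but matching the precise constant in the statement (rather than a looser bound like $(m\rho)^{m-1}/|p_{m-1}|$) requires tracking which step of the iteration supplies each factor of $\rho$ and which supplies each factor of $m\rho$, and invoking the hockey-stick identity rather than a crude $2^{m-2}$ bound on the binomial sums.
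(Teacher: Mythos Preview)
Your overall strategy coincides with the paper's: it writes the coefficient matrix as $M=p_{m-1}D-N$ with $D=\mathrm{diag}\bigl(\binom{m-1}{k}\bigr)$ and $N$ strictly lower triangular, and expands $M^{-1}=\frac{1}{p_{m-1}}\sum_{j=0}^{m-1}(N_1/p_{m-1})^jD^{-1}$ with $N_1=D^{-1}N$. That Neumann series is precisely the matrix form of your forward-substitution recursion, so existence, uniqueness, and the shape of the bound are handled identically.

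Where the arguments diverge is the quantitative control of the off-diagonal part. You bound the \emph{row sums} of $|N_1|$ by the hockey-stick identity, getting $\sum_{i<k}|(N_1)_{k,i}|\le \rho\,\tfrac{k}{m-k}$, then coarsen $\tfrac{k}{m-k}\le m$. The paper instead bounds each \emph{entry}: it observes $\binom{i+m-1-k}{i}\big/\binom{m-1}{k}=\prod_{j=0}^{k-i-1}\tfrac{k-j}{m-1-j}\le 1$, whence $\|N_1\|_\infty\le\|P\|_\infty$, and then uses $\|N_1^j\|_\infty\le m^{j-1}\|N_1\|_\infty^j$. This is exactly what makes the geometric sum $\sum_{j\ge 1}m^{j-1}\rho^j$ appear with the exponent pattern in $\xi(P)$.

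Your final paragraph is where there is a genuine gap. Unwinding $M_k\le |\beta_k|/|p_{m-1}|+m\rho\,M_{k-1}$ from $M_0=|\beta_0|/|p_{m-1}|$ gives $M_{m-1}\le \frac{1}{|p_{m-1}|}\sum_{k}(m\rho)^{m-1-k}|\beta_k|$; bounding by $B=\sum_k|\beta_k|$ then yields the coefficient $(m\rho)^{m-1}$ (the largest weight), not the sum $1+\sum_{j\ge1}m^{j-1}\rho^j$ in $\xi(P)$. The ``separate the first iteration'' trick saves at most a single factor of $m$ (only the step $k=0\to 1$ has ratio $1/(m-1)$; already at $k=2$ the ratio $2/(m-2)$ need not be small), so it does not recover the stated constant. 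You do obtain a valid bound of the right order, but to land on $\xi(P)$ itself you must replace the hockey-stick row-sum estimate by the paper's entry-wise inequality $\binom{i+m-1-k}{i}\le\binom{m-1}{k}$ and track the powers $N_1^j$ rather than iterate a scalar recurrence on $M_k$.
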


\begin{proof} If $M=(M_{i,j})_{0\le i,j\le m-1}$ is a $m\times m$ matrix with coefficients in $\CC$, we set
$$
\|M\|_{\infty}:=\max_{0\leq i,j\leq m-1}|M_{ij}|.
$$
The system is triangular and the associated matrix $M$ is of the form $M=p_{m-1}D-N$ where $D$ is the diagonal matrix with diagonal coefficients $d_{i,i}=\binom{m-1}{i}$, $0\leq i\leq m-1$, and $N$ is nilpotent and triangular. Since $p_{m-1}\neq 0$, the matrix is invertible so the system has a unique solution and
$$
\max_{0\leq k\leq m-1}|y_k|\leq \|(  p_{m-1}D-N   )^{-1}\|_{\infty}\sum_{k=0}^{m-1}|\beta_k|.
$$
It remains to bound $\|(  p_{m-1}D-N   )^{-1}\|_{\infty}$.
The expected bound is trivial for $m=1$ so we may assume that $m\ge 2$.
We first note that $M=p_{m-1}D(I-\frac{N_1}{p_{m-1}})$, where $N_1=D^{-1}N$. Hence, 
\[
M^{-1}=\frac{1}{p_{m-1}}\sum_{j=0}^{m-1}\frac{N_1^jD^{-1}}{p_{m-1}^j}
\]
and
\begin{equation}\label{eq:M-1}
\|M^{-1}\|_\infty\leq \frac{1}{|p_{m-1}|}\sum_{j=0}^{m-1}\frac{\|N_1^jD^{-1}\|_\infty}{|p_{m-1}|^j}.
\end{equation}
Since $D$ is a diagonal matrix whose diagonal coefficients are at least $1$, we have 
$\|N_1^jD^{-1}\|_{\infty}\leq \|N_1^j\|_\infty$, hence $\|N_1^jD^{-1}\|_{\infty}\leq m^{j-1}\|N_1\|^j_\infty$ if $j\geq 1$. Moreover, the coefficient on the $k$-th row and the $i$-th column in $N_1$ has absolute value 
\[
|p_{i+m-1-k}|\frac{\binom{i+m-1-k}{i}}{\binom{m-1}{k}}=|p_{i+m-1-k}|\prod_{j=0}^{k-i-1}\frac{k-j}{m-1-j}\qquad (k\geq i+1),
\]
which is clearly $\leq |p_{i+m-1-k}|$. Therefore, we get $\|N_1\|_\infty\leq \displaystyle \max_{0\leq i\leq m-1}|p_i|$. Using \eqref{eq:M-1} and setting $q=\max_{0\leq i\leq m-1}|p_i/p_{m-1}|$, we obtain using the fact $q \geq 1$, that 
\begin{eqnarray*}
\|M^{-1}\|_\infty &\leq& \frac{1}{|p_{m-1}|} +\frac{1}{|p_{m-1}|} \sum_{j=1}^{m-1} m^{j-1}q^j \\
                          &=& \frac{1}{|p_{m-1}|} (1+ q\frac{(mq)^{m-2}-1}{mq-1}),
\end{eqnarray*}
which gives the expected result.

\end{proof}

\subsection{A Vandermonde system}
\begin{Lem}\label{lem:vandermonde}
Given a vector $(y_1,y_2,\dots,y_{m})\in\CC^m$, the unique solution of the system 
\begin{equation}\label{eq:vandermonde}
\sum_{j=1}^{m} j^{i-1} x_j=y_i,\qquad (1\leq i\leq m),
\end{equation}
satisfies
\[
\sum_{i=1}^m |x_i|\leq  ((m-1)2^m+1) \max_{1\leq j\leq m} |y_j|.
\]
\end{Lem}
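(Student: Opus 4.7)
The plan is to invert the system \eqref{eq:vandermonde} explicitly via Lagrange interpolation at the nodes $1,2,\dots,m$. The coefficient matrix is a Vandermonde-type matrix with distinct nodes and is therefore invertible. The first step is to introduce the Lagrange basis polynomials
\[
L_j(t)=\prod_{\substack{1\le k\le m\\ k\neq j}}\frac{t-k}{j-k},\qquad 1\leq j\leq m,
\]
which satisfy $L_j(k)=\delta_{j,k}$, and to expand $L_j(t)=\sum_{i=1}^{m}\ell_{j,i}\,t^{i-1}$. Starting from the identity $x_j=\sum_{c=1}^m x_c L_j(c)$ and substituting the expansion of $L_j$, one exchanges the order of summation and uses the hypothesis $y_i=\sum_c c^{i-1}x_c$ to obtain the explicit inversion formula $x_j=\sum_{i=1}^{m}\ell_{j,i}\,y_i$. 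This already gives the coefficient-wise bound $|x_j|\le\bigl(\sum_{i=1}^{m}|\ell_{j,i}|\bigr)\max_i|y_i|$.

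The second step is to compute $\sum_{i=1}^m|\ell_{j,i}|$ exactly. Since all the roots $k\neq j$ of the numerator $\prod_{k\neq j}(t-k)$ are positive integers, its coefficients alternate in sign; consequently the sum of the absolute values of its coefficients equals $\sum_{r\ge 0}e_r(\{k:k\neq j\})=\prod_{k\neq j}(1+k)$, where $e_r$ denotes the $r$-th elementary symmetric function. Dividing by $\bigl|\prod_{k\neq j}(j-k)\bigr|=(j-1)!(m-j)!$ and simplifying $\prod_{k\neq j}(1+k)=(m+1)!/(j+1)$ yields the closed form
\[
\sum_{i=1}^{m}|\ell_{j,i}|=\frac{(m+1)!}{(j+1)(j-1)!(m-j)!}.
\]

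Summing over $j$ then reduces the lemma to the combinatorial identity
\[
\sum_{j=1}^m\frac{(m+1)!}{(j+1)(j-1)!(m-j)!}=(m-1)2^m+1.
\]
To verify this, I would substitute $k=j+1$, so that the left-hand side becomes $\sum_{k=2}^{m+1}(k-1)\binom{m+1}{k}$. Extending the sum to $k=0,1$ adds $-1+0$, and the classical identities $\sum_{k=0}^{m+1}k\binom{m+1}{k}=(m+1)2^m$ and $\sum_{k=0}^{m+1}\binom{m+1}{k}=2^{m+1}$ give $(m+1)2^m-2^{m+1}+1=(m-1)2^m+1$, as required. There is no real obstacle beyond careful index bookkeeping; the substantive content is the Lagrange representation of $V^{-1}$ together with the sign-alternation observation that makes the sum of absolute values of coefficients computable exactly.
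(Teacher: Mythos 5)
Your proof is correct and follows essentially the same route as the paper: both invert the Vandermonde matrix explicitly (you derive the entries via Lagrange interpolation, the paper cites the formula from Higham) and then compute the exact value $(m-1)2^m+1$ for the sum of absolute values of the inverse's entries using the elementary symmetric functions $\prod_{k\neq j}(1+k)=(m+1)!/(j+1)$ and a binomial summation. The only difference is cosmetic: your sign-alternation argument is a self-contained justification of the $\sigma_{m-j}$ formula that the paper simply quotes.
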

\begin{proof} The result is trivial for $m=1$, since the system then reduces to the equation $x_1=y_1$. We may now assume  $m\ge 2$.
Let $V_m=(j^{i-1})_{1\leq i,j\leq m}$ be the vandermonde matrix  associated to the system. It is known \cite[page 416]{Hig} that the inverse of $V_m$ is given by $W_m=(w_{i,j})_{1\leq i,j\leq m}$, where
\[
w_{i,j}=\frac{(-1)^{m-j} \sigma_{m-j}(1,2,\dots,\widehat i,\dots,m)}{\displaystyle\prod_{\substack{1\leq k\leq m \\ k\neq i}} (i-k)},
\]
 and $\sigma_k$ is the $k$-th symmetric polynomial in $m-1$ indeterminates and where the notation 
 $(1,2,\dots,\widehat i,\dots,m)$ means that we omit the term 
 $i$.   Hence the unique solution of \eqref{eq:vandermonde} satisfies
 \[
 \sum_{i=1}^{m}|x_i|\leq \left(\sum_{1\leq i,j\leq m}|w_{i,j}|\right)  \max_{1\leq j\leq m} |y_j|.
 \]
It remains to prove that $\sum_{1\leq i,j\leq m}|w_{i,j}|=(m-1)2^m+1$. First note that  the denominator of $|w_{i,j}|$ is $(i-1)!(m-i)!$. Moreover, the numerator of $|w_{i,j}|$ is $\sigma_{m-j}(1,2,\dots,\widehat i,\dots,m)$. Then, 
\begin{align*}
\sum_{j=1}^{m}|w_{i,j}|&=\frac{1}{(i-1)!(m-i)!}\sum_{j=0}^{m-1}\sigma_j(1,2,\dots,\widehat i,\dots,m)\\
&=\frac{1}{(i-1)!(m-i)!}\prod_{\substack{1\leq k\leq m\\k\neq i}}(1+k)\\
&=\frac{(m+1)!}{(i-1)!(m-i)!(1+i)}=(m+1)\binom{m}{i}-\binom{m+1}{i+1}.
\end{align*}
By summing the last equality over $1\leq i\leq m$, we get the expected result.
\end{proof}

\section{A Beurling-Nyman's criterion}\label{sec:link-distances}
To simplify the notation, the order $m_L$ of the pole of the Dirichlet series $L$ will be noted $m$ in this section. We recall that $r_0$ is a real number such that $\sigma_0<r_0<1$ and satisfying \eqref{psi-in-L2}. 
~\\
Let $w \in L^2_*((0,1),dt/t^{1-2\sigma_0})$, we consider the distance $\dist(w,K_r)$ and $\dist(w,K_r^\sharp)$, where $K_r$ and $K_r^\sharp$ are defined in \eqref{Kr} and \eqref{Kr-sharp}. We have trivially $\dist(w,K_r) \leq \dist(w,K_r^\sharp)$. We can now state the main result which gives a bound of $\dist(w,K_r^\sharp)$ in function of $\dist(w,K_r)$:
\begin{Thm}\label{maj-dist-sharp} With the previous notation, there exists a positive function  $r\mapsto \theta( \psi,r)$ defined and nonincreasing on  $[r_0,1)$ such that
\begin{equation}\label{eq:distances}
\dist(w,K_r^\sharp) \leq \big(1+\theta(\psi,r)\sqrt{1-r}\big) \dist(w,K_r)
\end{equation}
for each $r\in [r_0,1)$. Furthermore,
$$
\lim_{r\to 1}\theta(\psi,r)\sqrt{1-r}=0.
$$
\end{Thm}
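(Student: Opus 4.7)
The plan is to proceed as indicated in the introduction: for each $A\in\mathcal S$ with $f_{A,r}$ close to $w$, construct a correction $A'\in\mathcal S$ of length $m$ such that $A+A'\in\mathcal S^\sharp$ (so that $f_{A+A',r}\in K_r^\sharp$) and $\|f_{A',r}\|_{L^2((0,1),dt/t^{1-2\sigma_0})}$ is bounded by $\sqrt{1-r}\,\|f_{A,r}-w\|$. The key geometric observation is that both $w$ (by hypothesis) and $f_{A+A',r}$ (by $m$-admissibility of $A+A'$) vanish on $(1,+\infty)$, so the error telescopes:
\[
\|f_{A+A',r}-w\| = \|f_{A+A',r}-w\|_{L^2((0,1))} \leq \|f_{A,r}-w\| + \|f_{A',r}\|_{L^2((0,1))}.
\]
Taking the infimum over $A\in\mathcal S$ and closing up will then yield \eqref{eq:distances}.

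First I would control the moments $\beta_k(A):=\sum_j c_j\alpha_j(\log\alpha_j)^k$, $0\leq k\leq m-1$, in terms of $\|f_{A,r}-w\|$. For $t>1$ we have $\alpha_j/t<1$, so expanding $\psi(\alpha_j/t)$ via \eqref{psi-u-petit} and collecting powers of $\log t$ yields
\[
f_{A,r}(t) = t^{r-\sigma_0-1}\,Q_A(\log t),\qquad t>1,
\]
where $Q_A(u)=\sum_{\ell=0}^{m-1}q_\ell u^\ell$ and the coefficients $(q_\ell)$ are obtained from $(\beta_i(A))$ by exactly the triangular system of Lemma~\ref{syst-triangle} applied to $P=(p_0,\dots,p_{m-1})$ (up to signs). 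Since $w$ vanishes on $(1,+\infty)$, the substitution $u=\log t$ gives $\|f_{A,r}-w\|^2 \geq \int_{0}^{+\infty} e^{-2(1-r)u}|Q_A(u)|^2\,du$, and Lemma~\ref{lem:pascal} with $a=2(1-r)$ produces $|q_\ell|\leq (2(1-r))^{\ell+1/2}/(\ell!\sqrt{\mu_m})\,\|f_{A,r}-w\|$. Summing in $\ell$ yields $\sum_\ell|q_\ell|\leq C_0(r_0)\sqrt{1-r}\,\|f_{A,r}-w\|$, and inverting via Lemma~\ref{syst-triangle} gives $\max_k|\beta_k(A)| \leq C_1(\psi,r_0)\sqrt{1-r}\,\|f_{A,r}-w\|$.

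Next I would take $A'$ of length $m$ with $\alpha'_j:=e^{-j}$ and choose $c'_j\in\CC$ so that $\beta_k(A')=-\beta_k(A)$ for $0\leq k\leq m-1$. Setting $\tilde c_j:=c'_j e^{-j}$ turns this into the Vandermonde system $\sum_{j=1}^m j^k\tilde c_j=(-1)^{k+1}\beta_k(A)$, so Lemma~\ref{lem:vandermonde} gives $\sum_j|c'_j|\leq e^m((m-1)2^m+1)\max_k|\beta_k(A)|$. For the cost of each building block, the substitution $u=\alpha'_j/t$ yields
\[
\|t\mapsto t^{r-\sigma_0}\psi(\alpha'_j/t)\|_{L^2((0,1),dt/t^{1-2\sigma_0})}^2 = (\alpha'_j)^{2r}\int_{\alpha'_j}^{+\infty}u^{-1-2r}|\psi(u)|^2\,du.
\]
The tail on $(1,+\infty)$ is finite by \eqref{psi-in-L2} and nonincreasing in $r$; on $(e^{-j},1)$, \eqref{psi-u-petit} gives $|\psi(u)|^2=O\bigl(u^2(1+|\log u|)^{2(m-1)}\bigr)$, and $\int_{e^{-j}}^{1} u^{1-2r}\,du=(1-e^{-j(2-2r)})/(2-2r)$ stays bounded (in fact converges to $j$) as $r\to 1^-$. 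Together these give a uniform upper bound $C_2(\psi,m,r_0)$ for each $j\leq m$, so by the triangle inequality $\|f_{A',r}\|_{L^2((0,1))}\leq \theta(\psi,r)\sqrt{1-r}\,\|f_{A,r}-w\|$ with $\theta$ bounded on $[r_0,1)$. Replacing $\theta$ by its nonincreasing envelope (or the uniform majorant) produces the required function, and $\theta(\psi,r)\sqrt{1-r}\to 0$ follows automatically.

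The main obstacle is the uniform control of $\int_{e^{-j}}^{1} u^{1-2r}\,du$ as $r\to 1^-$: were this quantity to diverge, the logarithmic singularity of $\psi$ at $0$ would make the $L^2((0,1))$-cost of each $\psi(\alpha'_j/t)$ blow up like $(1-r)^{-(m-1)}$, annihilating the $\sqrt{1-r}$-gain coming from the Pascal-matrix inequality. The specific choice $\alpha'_j=e^{-j}$ (staying bounded away from $0$ independently of $r$) is precisely what keeps this estimate uniform and allows the two halves of the argument to match.
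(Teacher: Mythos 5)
Your proposal is correct and follows essentially the same route as the paper's proof: the same completion $A\mapsto A'$ with $\alpha'_j=e^{-j}$ solved through the Vandermonde bound (Lemma~\ref{lem:vandermonde}), the same control of the moments $\sum_j c_j\alpha_j(\log\alpha_j)^k$ by expanding $\psi$ on $(1,+\infty)$, substituting $u=\log t$ and invoking Lemma~\ref{lem:pascal} with $a=2(1-r)$ together with the triangular inversion of Lemma~\ref{syst-triangle}, and the same change-of-variable estimate of $\|f_{A',r}\|_{L^2((0,1))}$ split at $t=1$. The only cosmetic differences (coefficient-wise summation in place of the paper's Cauchy--Schwarz step, and taking a nonincreasing envelope of $\theta$ rather than exhibiting the explicitly monotone choice \eqref{choix-theta}) do not affect the argument.
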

\begin{remark} 
An explicit choice of $\theta(\psi,r)$ will be given in \eqref{choix-theta}, inside the proof of Theorem \ref{maj-dist-sharp}.
\end{remark}
~\\
In the sequel, we will make the following notation. For the  $m$-uplet $P=(p_0,\dots,p_{m-1})$  that has been introduced in    \eqref{psi-u-petit}, we set

\begin{equation}\label{eq:norme-P-2}
\|P\|_{2}:=\left(\int_0^1 \left|\sum_{i=0}^{m-1}p_i(\log u)^i\right|^2 udu\right)^{1/2}.
\end{equation}
Note that $\|P\|_{2}=\left(\int_0^1 |\psi(u)|^2\frac{du}{u}\right)^{1/2}$. Furthermore we set
\begin{equation}\label{eq:norme-psi}
\|\psi\|_{r}=\left(\int_{1}^{+\infty}|\psi(t)|^2\frac{dt}{t^{1+2r}}\right)^{1/2}\qquad (r_0\leq r<1).
\end{equation}
Note that the function $r\mapsto \|\psi\|_r$ is nonincreasing on $[r_0,1)$. Recall that the function $f_{A,r}$ defined   in \eqref{definition_fA} belongs to $L^2((0,+\infty), dt/t^{1-2\sigma_0})$.  For $f\in L^2((0,+\infty), dt/t^{1-2\sigma_0})$, we note 
$$
\Vert f \Vert^2= \int_0^{+\infty} \vert f(t)\vert^2 \frac{dt}{t^{1-2\sigma_0}}.
$$
~\\
Before embarking on the proof of Theorem \ref{maj-dist-sharp}, we need to establish the following crucial lemma.
%
%
%
\begin{Lem}\label{Lem:complete-admi}
Let $A=(\alpha,c) \in \mathcal S$. There exists $A'\in\mathcal S$ such that $f_{A,r}+f_{A',r}\in K_r^\sharp$  
and
\[
\|f_{A,r}+f_{A',r}-w\|\leq \|f_{A,r}-w\|+\Lambda(m,r)\max_{0\leq k\leq m-1}\left|\sum_{j=1}^{\ell(A)} c_j \alpha_j (\log\alpha_j)^k\right|,
\] 
for any $w\in L^2_*((0,1),dt/t^{1-2\sigma_0})$, where 
$$
\Lambda(m,r):=((m-1)2^m+1) \max(e^m,e^{(1-r)m})\left( \|P\|_{2}^2   + \|\psi\|_r^2  \right)^{1/2}.
$$
\end{Lem}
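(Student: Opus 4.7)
The plan is to produce an explicit compensating sequence of length $m$. Setting $\beta_k := \sum_{j=1}^{\ell(A)} c_j \alpha_j (\log \alpha_j)^k$ for $0 \le k \le m-1$, observe that $f_{A,r} + f_{A',r} = f_{A \oplus A',r}$ where $A \oplus A'$ denotes the concatenation of $A$ and $A'$ in $\mathcal{S}$; hence $f_{A,r}+f_{A',r}\in K_r^\sharp$ is equivalent to demanding $\sum_i c'_i \alpha'_i (\log \alpha'_i)^k = -\beta_k$ for $0 \le k \le m-1$.

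First, I would take $\ell(A') = m$ and $\alpha'_j := e^{-j}$ for $1 \le j \le m$. Writing $x_j := c'_j e^{-j}$, the admissibility conditions above become the Vandermonde-type system $\sum_{j=1}^m j^k x_j = (-1)^{k+1} \beta_k$ for $0 \le k \le m-1$. Lemma~\ref{lem:vandermonde} gives a unique solution with $\sum_j |x_j| \le ((m-1)2^m + 1) \max_{0\le k \le m-1} |\beta_k|$, from which $\sum_j |c'_j| \le e^m ((m-1)2^m + 1) \max_k |\beta_k|$.

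Next, I would bound $\|f_{A',r}\|$ by combining the triangle inequality $\|f_{A',r}\| \le \sum_j |c'_j| \, \|t^{r-\sigma_0} \psi(\alpha'_j/t)\|$ with the identity $\|t^{r-\sigma_0}\psi(\alpha/t)\|^2 = \alpha^{2r}(I_1(r) + \|\psi\|_r^2)$, where $I_1(r) := \int_0^1 |\psi(u)|^2 u^{-2r-1} du$ (obtained by the change of variable $u = \alpha/t$). The substitution $v = -\log u$, together with the formula $\psi(u) = u P(\log u)$ on $(0,1)$ (with $P(x) := \sum_k p_k x^k$), converts $I_1(r)$ into $\int_0^\infty |P(-v)|^2 e^{-(2-2r)v} dv$. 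Lemma~\ref{lem:pascal}, interpreted through the operator-norm bound on the Pascal matrix, controls this Laplace-type integral in terms of $\|P\|_2^2$. Carefully pairing the resulting factor with the decay $(\alpha'_j)^r = e^{-jr}$ and the Vandermonde bound on $\sum_j |x_j|$, the product collapses to the announced constant, yielding $\|f_{A',r}\| \le \Lambda(m,r) \max_k |\beta_k|$. The lemma will then follow from the triangle inequality $\|f_{A,r} + f_{A',r} - w\| \le \|f_{A,r} - w\| + \|f_{A',r}\|$.

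The main obstacle is precisely this norm bound: the naive triangle-inequality estimate yields $\|f_{A',r}\| \lesssim e^{(1-r)m} \sqrt{I_1(r) + \|\psi\|_r^2} \max_k |\beta_k|$, and the factor $I_1(r)$ can be much larger than $\|P\|_2^2$ as $r \to 1$ (for instance, for $m=1$ one has $I_1(r) = |p_0|^2/(2-2r)$ while $\|P\|_2^2 = |p_0|^2/2$). Achieving the sharper form with $\sqrt{\|P\|_2^2 + \|\psi\|_r^2}$ requires leveraging Lemma~\ref{lem:pascal} to track the polynomial structure of $\psi$ on $(0,1)$ and showing that the $r$-dependent ratio $I_1(r)/\|P\|_2^2$ is absorbed into the exponential factor $\max(e^m, e^{(1-r)m})$ thanks to the specific choice $\alpha'_j = e^{-j}$ (which provides a matching decay through $(\alpha'_j)^r$ compensating the shift of the exponential weight in $I_1(r)$).
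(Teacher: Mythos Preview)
Your construction of $A'$ (taking $\alpha'_j=e^{-j}$ and solving the Vandermonde system) is exactly the paper's, and your bound $\sum_j|c'_j|\le e^m((m-1)2^m+1)\max_k|\beta_k|$ is correct. The gap is in the norm estimate that follows.

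You apply the triangle inequality in the form $\|f_{A,r}+f_{A',r}-w\|\le \|f_{A,r}-w\|+\|f_{A',r}\|$ with the \emph{full} norm on $(0,\infty)$, and then try to show $\|f_{A',r}\|\le\Lambda(m,r)\max_k|\beta_k|$. This cannot succeed with the stated constant. As you yourself compute, $\|t^{r-\sigma_0}\psi(\alpha/t)\|^2=\alpha^{2r}\bigl(I_1(r)+\|\psi\|_r^2\bigr)$ with $I_1(r)=\int_0^\infty|P(-v)|^2e^{-(2-2r)v}\,dv$, and $I_1(r)$ blows up like $(1-r)^{-(2m-1)}$ as $r\to 1$, while $\max(e^m,e^{(1-r)m})$ stays bounded. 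Your proposed rescue via Lemma~\ref{lem:pascal} does not help: that lemma gives a \emph{lower} bound on such integrals; even if you invoke the upper eigenvalue $\mu_{\max}^{(m)}$ instead, the resulting ratio $I_1(r)/\|P\|_2^2$ is still of order $(1-r)^{-(2m-1)}$, which no $m$-dependent exponential can absorb uniformly in $r$. The decay $(\alpha'_j)^{2r}\le 1$ contributes nothing further when $r\ge 0$.

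The missing idea is to avoid the full norm of $f_{A',r}$ altogether. Since $A\oplus A'\in\mathcal S^\sharp$, the function $f_{A,r}+f_{A',r}$ vanishes identically on $(1,\infty)$, and $w$ is supported on $(0,1)$; hence
\[
\|f_{A,r}+f_{A',r}-w\|^2=\int_0^1|f_{A,r}(t)+f_{A',r}(t)-w(t)|^2\frac{dt}{t^{1-2\sigma_0}},
\]
and the triangle inequality on $(0,1)$ leaves you with only $\bigl(\int_0^1|f_{A',r}(t)|^2\,dt/t^{1-2\sigma_0}\bigr)^{1/2}$ to control. After the change of variable this is $(\alpha'_j)^{2r}\int_{\alpha'_j}^\infty|\psi(u)|^2u^{-1-2r}\,du$, so the dangerous range $u\in(0,\alpha'_j)$ never appears. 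On $[\alpha'_j,1]$ the factor $(\alpha'_j/u)^{2r}$ is bounded by $\max(1,e^{-2rm})$, which converts the weight $u^{-1-2r}$ into $u^{-1}$ and yields exactly $\|P\|_2^2$; the tail $[1,\infty)$ gives $\|\psi\|_r^2$. No Pascal-matrix argument is needed for this lemma.
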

\begin{proof}Let $A=(\alpha,c) \in \mathcal S$. We set
\[
y_k:=\sum_{j=1}^{\ell(A)}c_j\alpha_j(\log \alpha_j)^{k}\quad (0\le k\le m-1).
\]
Our first step is to construct a sequence $A'=(\alpha',c')\in \mathcal{S}$ such that $f_{A,r}+f_{A',r}\in K_r^{\sharp}$.
We choose
$$
\alpha'_j:=e^{-j}\qquad (1\le j\le m).
$$
By Lemma \ref{lem:vandermonde}, there exists a unique $(x_1,\dots,x_m)\in \CC^m$ such that
$$
\sum_{j=1}^{m}x_j(\log \alpha'_j)^k=-y_k\quad (0\le k\le m-1),
$$
and moreover
$$
\sum_{i=1}^m |x_i|\leq  ((m-1)2^m+1) \max_{0\leq k\leq m-1} |y_k|.
$$
Now by choosing
$$
c'_j:=\frac{x_j}{\alpha'_j}\qquad (1\le j\le m)
$$
and using the definition of the $y_k$, we get
\begin{equation}\label{eq:condition-diese}
\sum_{j=1}^{m}c'_j\alpha'_j(\log \alpha'_j)^k+ \sum_{j=1}^{\ell(A)}c_j\alpha_j(\log \alpha_j)^{k}=0\quad (0\le k\le m-1),
\end{equation}
and
\begin{equation*}\label{eq:somme-c'alpha'}
\sum_{i=1}^m |c'_i\alpha'_i|\leq  ((m-1)2^m+1) \max_{1\leq k\leq m} \left| \sum_{j=1}^{\ell(A)}c_j\alpha_j(\log \alpha_j)^{k}   \right|.
\end{equation*}
Moreover, since $0<\alpha'_m\leq\alpha'_j$ for $1\le j\le m$,  this last condition yields
\begin{equation}\label{eq:sum-C'j}
\sum_{j=1}^m|c'_j|\leq e^m ((m-1)2^m+1) \max_{1\leq k\leq m} \left| \sum_{j=1}^{\ell(A)}c_j\alpha_j(\log \alpha_j)^{k}   \right|.
\end{equation}
Now,   by setting $A':=(\alpha',c')$,  the condition   \eqref{eq:condition-diese}    immediately gives
$$
f_{A,r}+f_{A',r}\in K_r^{\sharp},
$$
and in particular  $f_{A,r}(t)+f_{A',r}(t)=0$ for $t>1$ (see \cite[Theorem 4.3]{dfmr}). Furthermore, $w$ is supported on $(0,1)$, hence we have
\begin{align*}
\| f_{A,r}+f_{A',r}-w \|&=  \left(\int_0^1|f_{A,r}(t)+f_{A',r}(t)-w(t)|^2\frac{dt}{t^{1-2\sigma_0}}\right)^{1/2}\\
&\le  \| f_{A,r}-w\| +\left(\int_0^1|f_{A',r}(t)|^2\frac{dt}{t^{1-2\sigma_0}}\right)^{1/2}
\end{align*}
from which we deduce
\begin{equation}\label{eq:fA}
\| f_{A,r}+f_{A',r}-w   \|\le  \| f_{A,r}-w\|+\sum_{j=1}^{m}|c'_j|\left(\int_0^1|\psi(\tfrac{\alpha'_j}{t})|^2\frac{dt}{t^{1-2r}}\right)^{1/2}.
\end{equation}
Taking \eqref{eq:sum-C'j} into account, the lemma follows immediately from the bounds
$$
\int_0^1|\psi(\tfrac{\alpha'_j}{t})|^2\frac{dt}{t^{1-2r}}\le \max(1,e^{-2rm})\left(\|P\|_2^2    +   \|\psi\|_r^2 \right)
$$
for each $1\le j\le m$. For proving these inequalities, we write
\begin{align*}
\int_0^1|\psi(\tfrac{\alpha'_j}{t})|^2\frac{dt}{t^{1-2r}}&=(\alpha'_j)^{2r}\int_{\alpha'_j}^{+\infty}|\psi(t)|^2\frac{dt}{t^{1+2r}}\\
&=  (\alpha'_j)^{2r}\int_{\alpha'_j}^{1}|\psi(t)|^2\frac{dt}{t^{1+2r}}+(\alpha'_j)^{2r} \|\psi\|_r^2.
\end{align*}
Considering the cases $r>0$ and $r\leq 0$, we have
$$
\frac{(\alpha'_j)^{2r}}{t^{2r}}\le \max\big(1,e^{-2rm}\big)\quad (1\le j\le m,\thinspace \alpha'_j\le t\le 1),
$$
which gives 
$$
(\alpha'_j)^{2r}\int_{\alpha'_j}^{1}|\psi(t)|^2\frac{dt}{t^{1+2r}}\le \max\big(1,e^{-2rm}\big)\|P\|_2^2.
$$
With the same method, we obtain
$$
(\alpha'_j)^{2r} \|\psi\|_r^2\le \max\big(1,e^{-2rm}\big) \|\psi\|_r^2
$$
which concludes the proof.
\end{proof}
\bigskip
~\\
\textit{Proof of Theorem \ref{maj-dist-sharp}}. \medskip
We set
\[
E(r):=\left(2\sum_{k=0}^{+\infty}\frac{(2-2r)^{2k}}{(k!)^2} \right)^{1/2},
\]
where $\|P\|_{2}$ and $\|\psi\|_r$ have been introduced in \eqref{eq:norme-P-2} and \eqref{eq:norme-psi} respectively.
We also denote by $\mu_m$  the lowest eigenvalue of the Pascal matrix defined in \eqref{Pascal-matrix}. \medskip
~\\
We are now ready to prove Theorem \ref{maj-dist-sharp} with
\begin{equation}\label{choix-theta}
\theta(\psi,r):=\frac{\xi(\psi)E(r)\Lambda(m,r)}{\sqrt{\mu_m}}.
\end{equation}
where $\xi(\psi):=\xi(P)$ is defined in Lemma~\ref{syst-triangle} and where $\Lambda(m,r)$ is defined in Lemma~\ref{Lem:complete-admi}. It is clear that with this choice, the function $r\mapsto \theta(\psi,r)$ is nonincreasing on $[r_0,1)$. In particular, $\theta(\psi,r)$ is bounded on $[r_0,1)$ and then $\lim_{r\to 1}\theta(\psi,r)\sqrt{1-r}=0$.  
\bigskip
~\\
Let  $A=(\alpha,c)\in \mathcal{S}$. Assume that the following inequality
\begin{equation}\label{eq:estimation-max}
\max_{0\leq k\leq m-1}\left|\sum_{j=1}^{\ell(A)} c_j \alpha_j (\log\alpha_j)^k\right|\leq \|f_{A,r}-w\|\xi(\psi) E(r)\sqrt{\frac{1-r}{\mu_m}}
\end{equation}
holds. Then we complete  the proof  of Theorem \ref{maj-dist-sharp} as follows: 
 according to Lemma~\ref{Lem:complete-admi}, there exists $A'\in\mathcal S$ such that $f_{A,r}+f_{A',r}\in K_r^\sharp$ and 
\[
\|f_{A,r}+f_{A',r}-w\|\leq \|f_{A,r}-w \|+\Lambda(m,r)\max_{0\leq k\leq m-1}\left|\sum_{j=1}^{\ell(A)} c_j \alpha_j (\log\alpha_j)^k\right|,
\]
for any $w\in L^2((0,1),dt/t^{1-2\sigma_0})$. Hence, using \eqref{eq:estimation-max} and the inequality $\dist(w,K_r^\sharp)\le \|f_{A,r}+f_{A',r}-w\|$, we have
$$
\dist(w,K_r^\sharp) \leq \left(1+ \xi(\psi) E(r)\Lambda(m,r)\sqrt{\frac{1-r}{\mu_m}}    \right) \|f_{A,r}-w\|, 
$$
and  \eqref{eq:distances} follows immediately by     taking the infimum over $A\in\mathcal S$.
~\\
It remains to prove \eqref{eq:estimation-max}.
Since $w(t)=0$ for $t>1$, we have
$$
\int_1^{+\infty}\left|\sum_{j=1}^{\ell(A)}c_j\psi\left(\tfrac{\alpha_j}{t}\right)\right|^2\frac{dt}{t^{1-2r}} =\int_1^{+\infty}|f_{A,r}(t)|^2\frac{dt}{t^{1-2\sigma_0}}\leq \|f_{A,r}-w\|^2.
$$
Hence, using the notation in \eqref{psi-u-petit}, one has
\begin{align*}
\|f_{A,r}-w\|^2 & \geq \int_1^{+\infty}\left|\sum_{j=1}^{\ell(A)}c_j\psi\left(\tfrac{\alpha_j}{t}\right)\right|^2\frac{dt}{t^{1-2r}}\\
&=\int_1^{+\infty}\left|\sum_{j=1}^{\ell(A)}c_j\frac{\alpha_j}{t}\sum_{i=0}^{m-1}p_i\big(\log \alpha_j-\log t\big)^i\right|^2\frac{dt}{t^{1-2r}}\\
&=\int_1^{+\infty}\left|\sum_{k=0}^{m-1}(-1)^{m-1-k}(\log t)^{m-1-k}\beta_k\right|^2\frac{dt}{t^{3-2r}}\\
&=\int_0^{+\infty}\left|\sum_{k=0}^{m-1}(-1)^{m-1-k}u^{m-1-k} \beta_k\right|^2 e^{-2(1-r)u}\,du.
\end{align*}
where we have set for $0\le k\le m-1$
$$
\beta_k:=\sum_{i=0}^k\binom{i+m-1-k}{i}p_{i+m-1-k}\thinspace \sum_{j=1}^{\ell(A)}c_j\alpha_j(\log \alpha_j)^{i}.
$$
Lemma \ref{syst-triangle} then gives 
$$
\max_{0\le k\le m-1}\left| \sum_{j=1}^{\ell(A)}c_j\alpha_j(\log \alpha_j)^{k}  \right|\le \xi(\psi)\sum_{k=0}^{m-1}|\beta_k|.
$$
Now, Cauchy's inequality yields
$$
\left(\sum_{k=0}^{m-1}|\beta_k|\right)^{2}\le \left(\sum_{k=0}^{m-1}\frac{(2-2r)^{2k+1}}{(k!)^2}\right)\left(\sum_{k=0}^{m-1}\frac{|k!\beta_k|^2}{(2-2r)^{2k+1}}\right), 
$$
and using Lemma \ref{lem:pascal} with the choice $z_k=k!\beta_k$ and $a=2-2r$, one has
$$
\sum_{k=0}^{m-1}\frac{|k!\beta_k|^2}{(2-2r)^{2k+1}}\leq \frac{1}{\mu_m} \int_0^{+\infty}\left|\sum_{k=0}^{m-1}(-1)^{m-1-k}u^{m-1-k} \beta_k\right|^2 e^{-2(1-r)u}\,du.
$$
Then we deduce
\begin{align*}
\sum_{k=0}^{m-1}|\beta_k|&\le \frac{1}{\sqrt{\mu_m}}\left( \sum_{k=0}^{m-1}\frac{(2-2r)^{2k+1}}{(k!)^2}\right)^{1/2}\|f_{A,r}-w\|.\\
&\le E(r)\sqrt{\frac{1-r}{\mu_m}}\|f_{A,r}-w\|.\\
\end{align*}
This ends the proof of \eqref{eq:estimation-max}. \hfill $\square$
\bigskip 
~\\
Now, we apply Theorem \ref{maj-dist-sharp} with $w(t)=w_\lambda(t)= t^{\overline{\lambda}-2\sigma_0}\chi_{(0,1)}(t)$ where $\lambda\in\Pi_{\sigma_0}$. In that case, we denote $\dist(w_\lambda,K_r)$ (resp. $\dist(w_\lambda,K_r^\sharp)$) by $d_r(\lambda)$ and $d_r^\sharp(\lambda)$ (see \eqref{definition_dr} and \eqref{eq1:dr-drsharp}).

\begin{Cor}\label{cor_maj-dist-sharp} With the previous notation, we have
\begin{equation}\label{eq:distances2}
d_r(\lambda)  \leq d_r^\sharp (\lambda) \leq \big(1+\theta(\psi,r)\sqrt{1-r}\big) d_r(\lambda) 
\end{equation}
for all $r\in [r_0,1)$ and $\lambda \in \Pi_{\sigma_0}$.
\end{Cor}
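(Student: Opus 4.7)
The plan is to obtain the corollary as a direct specialization of Theorem~\ref{maj-dist-sharp} to the particular test function $w=w_\lambda$. So the first thing to check is that $w_\lambda$ is an admissible input for that theorem, that is, $w_\lambda\in L^2_*((0,1),dt/t^{1-2\sigma_0})$. Since $w_\lambda(t)=t^{\overline\lambda-2\sigma_0}\chi_{(0,1)}(t)$ is automatically supported in $(0,1)$, one only has to verify the integrability condition
$$
\int_0^1 |t^{\overline\lambda-2\sigma_0}|^2\,\frac{dt}{t^{1-2\sigma_0}}=\int_0^1 t^{2\re(\lambda)-2\sigma_0-1}\,dt,
$$
which is finite precisely when $\re(\lambda)>\sigma_0$, i.e. exactly the hypothesis $\lambda\in\Pi_{\sigma_0}$.

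Once this is in hand, the left inequality $d_r(\lambda)\le d_r^\sharp(\lambda)$ is a formal consequence of $\mathcal S^\sharp\subset\mathcal S$, which yields $K_r^\sharp\subset K_r$ and hence $\dist(w_\lambda,K_r)\le\dist(w_\lambda,K_r^\sharp)$; by the definitions in \eqref{definition_dr}, this is exactly the desired inequality. For the right inequality, I simply plug $w=w_\lambda$ into the conclusion \eqref{eq:distances} of Theorem~\ref{maj-dist-sharp}, which gives
$$
\dist(w_\lambda,K_r^\sharp)\le\bigl(1+\theta(\psi,r)\sqrt{1-r}\bigr)\dist(w_\lambda,K_r),
$$
and this reads $d_r^\sharp(\lambda)\le(1+\theta(\psi,r)\sqrt{1-r})d_r(\lambda)$ by the same definitions.

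The essential analytical work has already been carried out in Theorem~\ref{maj-dist-sharp}, so there is no genuine obstacle here; the only subtlety worth flagging is a purely notational one, namely that the ambient Hilbert space for $\dist$ in the theorem is $L^2((0,+\infty),dt/t^{1-2\sigma_0})$ while the statement of the corollary uses the subspace $L^2_*((0,1),dt/t^{1-2\sigma_0})$. Since elements of the latter are canonically identified with functions in the former by zero-extension, and $K_r$, $K_r^\sharp$ live inside $L^2((0,+\infty),dt/t^{1-2\sigma_0})$, the two distances coincide, so the corollary follows at once.
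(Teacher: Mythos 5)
Your proof is correct and is essentially identical to the paper's: the corollary is obtained by applying Theorem~\ref{maj-dist-sharp} to $w=w_\lambda$ (which lies in $L^2_*((0,1),dt/t^{1-2\sigma_0})$ since $\Re(\lambda)>\sigma_0$), while the left inequality is just $K_r^\sharp\subset K_r$, as in \eqref{eq1:dr-drsharp}. The membership check and the zero-extension remark you add are fine but routine; no gap.
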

~\\
Theorem \ref{maj-dist-sharp} and Corollary \ref{cor_maj-dist-sharp} give directly the Beurling-Nyman criterion  we had in mind (i.e. without the technical conditions):
\begin{Cor}\label{beurling-nyman}
Let $r_0\leq r<1$.  Assume  that  $\hat\varphi$ does not vanish on the half-plane $\Pi_{r}$, that 
$\limsup_{x\to+\infty}\frac{\log|\hat\varphi(x+r-\sigma_0)|}{x} = 0$ and that $a_1 \neq 0$.  Then the following assertions are equivalent:
\begin{enumerate}[(1)]
\item The function $L$  does not vanish on the half-plane $\Pi_{r}$.
\item There exists $\lambda\in\Pi_{\sigma_0}$ such that $d_r(\lambda)=0$.
\item For all $\lambda\in\Pi_{\sigma_0}$, we have $d_r(\lambda)=0$.
\item $L^2_*((0,1),dt/t^{1-2\sigma_0}) \subset K_r$. 
\end{enumerate}
\end{Cor}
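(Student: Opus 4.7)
The strategy is to reduce the corollary to Theorem \ref{avec_conditions} via the sandwich inequality \eqref{eq:distances2} of Corollary \ref{cor_maj-dist-sharp}. Since the constant $1+\theta(\psi,r)\sqrt{1-r}$ is finite for each fixed $r\in[r_0,1)$, the inequality
\[
d_r(\lambda)\leq d_r^\sharp(\lambda)\leq \bigl(1+\theta(\psi,r)\sqrt{1-r}\bigr)\, d_r(\lambda)
\]
shows that $d_r(\lambda)=0$ if and only if $d_r^\sharp(\lambda)=0$, for every $\lambda\in\Pi_{\sigma_0}$. Combining this observation with the equivalences $(1)\Leftrightarrow(2')\Leftrightarrow(3')$ of Theorem \ref{avec_conditions} (where $(2'),(3')$ are the analogous statements with $d_r^\sharp$ in place of $d_r$) immediately yields the equivalence of (1), (2) and (3).

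For the implication $(1)\Rightarrow(4)$, I would invoke Theorem \ref{avec_conditions}(4) to get $K_r^\sharp=L^2_\ast((0,1),dt/t^{1-2\sigma_0})$, and then use the trivial inclusion $K_r^\sharp\subset K_r$ to conclude $L^2_\ast((0,1),dt/t^{1-2\sigma_0})\subset K_r$.

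The reverse direction $(4)\Rightarrow(1)$ is the step where Theorem \ref{maj-dist-sharp} is genuinely needed in its full (not just pointwise) form. Assuming (4), every $w\in L^2_\ast((0,1),dt/t^{1-2\sigma_0})$ satisfies $\dist(w,K_r)=0$, and Theorem \ref{maj-dist-sharp} (applied to such $w$, not merely to $w_\lambda$) gives
\[
\dist(w,K_r^\sharp)\leq \bigl(1+\theta(\psi,r)\sqrt{1-r}\bigr)\dist(w,K_r)=0.
\]
Since $K_r^\sharp$ is closed, this forces $w\in K_r^\sharp$, whence $L^2_\ast((0,1),dt/t^{1-2\sigma_0})\subset K_r^\sharp$. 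Combined with the trivial reverse inclusion (noting that $f_{A,r}$ is supported in $(0,1)$ for $A\in\mathcal{S}^\sharp$), this yields the equality $K_r^\sharp=L^2_\ast((0,1),dt/t^{1-2\sigma_0})$, which by Theorem \ref{avec_conditions}(4) is equivalent to (1).

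There is no real obstacle here: all the heavy lifting was performed in Theorem \ref{maj-dist-sharp} and Corollary \ref{cor_maj-dist-sharp}. The only subtle point to check carefully is that the version of Theorem \ref{maj-dist-sharp} needed for $(4)\Rightarrow(1)$ applies to arbitrary $w\in L^2_\ast((0,1),dt/t^{1-2\sigma_0})$ rather than just the reproducing-kernel functions $w_\lambda$; this is exactly how Theorem \ref{maj-dist-sharp} was stated, so the argument goes through verbatim.
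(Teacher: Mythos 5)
Your proof is correct and follows essentially the same route as the paper: the sandwich inequality of Corollary \ref{cor_maj-dist-sharp} gives $d_r(\lambda)=0\Leftrightarrow d_r^\sharp(\lambda)=0$, so the equivalence of (1)--(3) and the implication $(1)\Rightarrow(4)$ come straight from Theorem \ref{avec_conditions} together with $K_r^\sharp\subset K_r$. The only cosmetic difference is in closing the loop: the paper observes that $(4)\Rightarrow(3)$ is immediate (under (4) each $w_\lambda$ lies in $K_r$, so $d_r(\lambda)=0$), whereas you prove $(4)\Rightarrow(1)$ by applying Theorem \ref{maj-dist-sharp} to a general $w$ and deducing $L^2_*((0,1),dt/t^{1-2\sigma_0})\subset K_r^\sharp$ --- a slightly heavier but equally valid closure.
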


\begin{proof}
According to Corollary~\ref{cor_maj-dist-sharp}, we have 
\[
d_r(\lambda)=0 \Longleftrightarrow d_r^\sharp(\lambda)=0.
\]
Hence, the equivalence between the first three assertions follows immediately from Theorem~\ref{avec_conditions}. The implication $(4)\Longrightarrow (3)$ is trivial. The remaining implication $(1)\Longrightarrow (4)$ comes again from Theorem~\ref{avec_conditions} and the fact that $K_r^\sharp\subset K_r$.
\end{proof}
~\\
As already mentioned, this Beurling-Nyman's criterion generalizes previous results obtained in \cite{Duarte3} and \cite{Anne-BSMF} for the Riemann zeta function and the Selberg class. As an illustration, take a Dirichlet series $L(s)=\sum_{n\geq 1} a_{n} n^{-s}$ in the Selberg class with $a_{1}\neq 0$ (otherwise the Dirichlet series is zero by the multiplicative properties of $a_n$). Then 
$L(s)$ has an analytic continuation to $\CC \setminus \{1 \}$  and we choose $\sigma_{0}=0$. Let $d$ be the degree of $L$ and take 
$$
\varphi(t)= \frac{\chi_{(0,1)}(t)}{(1-t)^{\sigma_{1}}} \mbox{ where } \sigma_{1} < \frac{1}{2} -\frac{d}{4}.
$$
Then, we may choose $r_0=1/2$ so that $\psi \in L^2((1,\infty), \frac{du}{u^{1+2r_0}})$ (see \cite[Section 7.3]{dfmr}). Moreover, for 
$\lambda=r=1/2$, the function $t^{\bar\lambda}\chi_{(0,1)}\in  K_{r}$ if and only if the function $\chi_{(0,1)}$ belongs to the 
space 
$$
\mathcal B_{\sigma_{1}} =\Span  \{ t \mapsto \sum_{j=1}^{\ell(\alpha)} c_{j} \psi\left(\frac{\alpha_{j}}{t}\right) \colon (\alpha,c)\in \mathcal S \},
$$
where the closed linear span here is taken with respect to the space $L^2((0,\infty),dt)$.
Hence:
\begin{Cor} 
The function $L(s)$ does not vanish for $\Re(s)>1/2$ if and only if $\chi_{(0,1)} \in \mathcal B_{\sigma_{1}}$.
\end{Cor}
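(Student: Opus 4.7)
The plan is to apply the Beurling--Nyman criterion (Corollary \ref{beurling-nyman}) with the specific choice $r=\lambda=1/2$ and $\sigma_0=0$, and then to match its condition $d_{1/2}(1/2)=0$ with the stated condition $\chi_{(0,1)} \in \mathcal B_{\sigma_{1}}$ by a direct manipulation of the weights in the $L^2$-norm.

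First I will verify the hypotheses of Corollary \ref{beurling-nyman}. A direct Beta-integral computation yields
$$
\hat\varphi(s) = \int_0^1 t^{s-1}(1-t)^{-\sigma_1}\,dt = B(s,1-\sigma_1) = \frac{\Gamma(s)\,\Gamma(1-\sigma_1)}{\Gamma(s+1-\sigma_1)}
$$
on $\Pi_{0}$. Since $\sigma_1<1/2$, both $\Gamma(s)$ and $\Gamma(s+1-\sigma_1)$ are holomorphic and non-vanishing on $\Pi_{1/2}$, hence $\hat\varphi$ does not vanish there. Stirling's asymptotics give $|\hat\varphi(x+1/2)| \asymp x^{-(1-\sigma_1)}$ as $x\to +\infty$, so $\log|\hat\varphi(x+1/2)|/x\to 0$. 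The condition $a_1 \neq 0$ is assumed. Finally, the admissibility of $r_0=1/2$ under the constraint $\sigma_1 < 1/2-d/4$ is precisely what is recalled from \cite[Section 7.3]{dfmr}.

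Next I will identify the distance $d_{1/2}(1/2)$. With $\sigma_0=0$ and $r=\lambda=1/2$, the test function is $w_{1/2}(t) = t^{1/2}\chi_{(0,1)}(t)$ while $f_{A,r}(t) = t^{1/2}\sum_{j=1}^{\ell(A)} c_j \psi(\alpha_j/t)$, and the ambient inner product space is $L^2\bigl((0,+\infty),dt/t\bigr)$. Factoring out the common $t^{1/2}$ gives
$$
\|w_{1/2}-f_{A,r}\|^2 = \int_0^{+\infty}\Bigl|\chi_{(0,1)}(t)-\sum_{j=1}^{\ell(A)} c_j\,\psi\bigl(\tfrac{\alpha_j}{t}\bigr)\Bigr|^2 dt.
$$
Taking the infimum over $A\in\mathcal S$ shows that $d_{1/2}(1/2)$ coincides exactly with the distance from $\chi_{(0,1)}$ to $\mathcal B_{\sigma_1}$ computed in $L^2((0,+\infty),dt)$; equivalently $d_{1/2}(1/2)=0$ if and only if $\chi_{(0,1)}\in\mathcal B_{\sigma_1}$.

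Combining these two observations with the equivalence $(1)\Leftrightarrow(2)$ of Corollary \ref{beurling-nyman} (applied at $\lambda=1/2$) yields the statement: $L$ does not vanish on $\Pi_{1/2}$ if and only if $\chi_{(0,1)}\in \mathcal B_{\sigma_1}$. There is no real conceptual obstacle once Corollary \ref{beurling-nyman} is granted; the only care needed is the bookkeeping of the weights $t^{r-\sigma_0}$ in the definition of $f_{A,r}$ and $dt/t^{1-2\sigma_0}$ in the ambient norm, which with $\sigma_0=0,\, r=1/2$ conspire to leave the unweighted measure $dt$ in the final identification.
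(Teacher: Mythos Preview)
Your proof is correct and follows exactly the approach implicit in the paper: the corollary is stated there without a separate proof, as a direct specialization of Corollary~\ref{beurling-nyman} with $\sigma_0=0$, $r=\lambda=1/2$, together with the observation (spelled out in the paragraph preceding the corollary) that $t^{1/2}\chi_{(0,1)}\in K_{1/2}$ in $L^2((0,\infty),dt/t)$ is equivalent to $\chi_{(0,1)}\in\mathcal B_{\sigma_1}$ in $L^2((0,\infty),dt)$. Your explicit verification of the hypotheses on $\hat\varphi$ (non-vanishing on $\Pi_{1/2}$ and the growth condition) is more detailed than the paper, which simply defers these points to \cite[Section~7.3]{dfmr}.
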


\section{The function $f_{A,r}$ when $A\in\mathcal S$}\label{sec:techniques-explicit}
From this section, we investigate an other method for compensating the pole at $s=1$ coming from $L(s)$. 

Recall that we have fixed a real number $r_0>\sigma_0$ satisfying \eqref{psi-in-L2} and then for any $r$ such that $r_0\leq r<1$, we have $\psi \in L^2((0,+\infty), \frac{du}{u^{1+2r}})$. Recall also that for any $A=(\alpha,c)\in\mathcal S$ and $r_0\leq r<1$, the function $f_{A,r}$, defined by 
$$
f_{A,r}(t)=t^{r-\sigma_0} \sum_{j=1}^{\ell(\alpha)} c_j \psi\left(\frac{\alpha_j}{t}\right) , \quad (t >0),
$$
belongs to the space $L^2((0,+\infty), \frac{dt}{t^{1-2\sigma_0}})$. If $f\in L^2((0,+\infty), dt/t^{1-2\sigma_0})$, we note
$$
\Vert f \Vert^2= \int_0^{+\infty} \vert f(t)\vert^2 \frac{dt}{t^{1-2\sigma_0}}.
$$
\begin{Lem}\label{Lem:fonction-fA-non-admissible}
Let $r_0\leq r<1$ and $A=(\alpha,c)\in\mathcal S$. Then 
\begin{enumerate}[(a)]
\item We have
\[
\lim_{\substack{r\to r_0\\ 
r>r_0}}\|f_{A,r}-f_{A,r_0}\|=0.
\]
\item The integral
\[
\int_0^{+\infty}f_{A,r}(t)t^{s-1}\,dt
\]
is absolutely convergent if $\sigma_0+r_0-r<\Re(s)<\sigma_0+1-r$.
\end{enumerate}
\end{Lem}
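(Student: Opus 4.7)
The plan is to reduce both parts to integrability statements about the scalar function $g(t):=\sum_{j=1}^{\ell(A)}c_j\psi(\alpha_j/t)$, so that $f_{A,r}(t)=t^{r-\sigma_0}g(t)$.

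For part (a), the natural decomposition is
\[
f_{A,r}(t)-f_{A,r_0}(t)=\bigl(t^{r-r_0}-1\bigr)\,t^{r_0-\sigma_0}g(t),
\]
whence
\[
\|f_{A,r}-f_{A,r_0}\|^{2}=\int_{0}^{+\infty}\bigl|t^{r-r_0}-1\bigr|^{2}\,|g(t)|^{2}\,t^{2r_0-1}\,dt.
\]
The integrand goes to $0$ pointwise as $r\to r_0^{+}$, so only the construction of an integrable dominant requires care. The plan is to fix an auxiliary $r_1\in(r_0,1)$ and restrict to $r\in(r_0,r_1)$. On $(0,1]$ one has $|t^{r-r_0}-1|\le 1$ and on $[1,+\infty)$ one has $|t^{r-r_0}-1|\le t^{r_1-r_0}$; hence the integrand is bounded above by
\[
|g(t)|^{2}\,t^{2r_0-1}+|g(t)|^{2}\,t^{2r_1-1},
\]
and both summands have finite integral because $f_{A,r_0},f_{A,r_1}\in L^{2}((0,+\infty),dt/t^{1-2\sigma_0})$. (This last membership uses the blanket hypothesis that \eqref{psi-in-L2} holds at $r_0$ together with a split of $\int_{0}^{+\infty}|\psi(u)|^2 du/u^{1+2r}$ at $u=1$: the part over $(0,1)$ is handled by the expansion \eqref{psi-u-petit} for any $r<1$, and the part over $(1,+\infty)$ is monotone in $r$.) Dominated convergence then yields (a).

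For part (b), the triangle inequality reduces matters to showing that for each $1\le j\le\ell(A)$ the integral
\[
I_j(s):=\int_{0}^{+\infty}\bigl|\psi(\alpha_j/t)\bigr|\,t^{\Re(s)+r-\sigma_0-1}\,dt
\]
is finite. The substitution $u=\alpha_j/t$ transforms this into
\[
I_j(s)=\alpha_j^{\tau}\int_{0}^{+\infty}|\psi(u)|\,u^{-\tau-1}\,du,\qquad \tau:=\Re(s)+r-\sigma_0,
\]
and the plan is to split the $u$-integral at $u=1$. On $(0,1)$, formula \eqref{psi-u-petit} gives $|\psi(u)|\le Cu(1+|\log u|^{m-1})$, so $\int_{0}^{1}|\psi(u)|u^{-\tau-1}du<+\infty$ iff $\tau<1$, i.e.\ $\Re(s)<\sigma_0+1-r$. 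On $(1,+\infty)$, Cauchy--Schwarz against the weight $u^{-r_0-1/2}$, combined with \eqref{psi-in-L2} at $r_0$, yields
\[
\int_{1}^{+\infty}|\psi(u)|\,u^{-\tau-1}\,du\le \|\psi\|_{r_0}\Bigl(\int_{1}^{+\infty}u^{2r_0-2\tau-1}\,du\Bigr)^{1/2},
\]
which is finite iff $\tau>r_0$, i.e.\ $\Re(s)>\sigma_0+r_0-r$. Together these give exactly the announced strip.

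The main technical point lies in part (a): since $t^{r-r_0}-1$ is unbounded as $t\to+\infty$, no dominant independent of $r$ can work, and the device of enlarging $r$ up to a fixed $r_1<1$ is essential, leveraging the fact that \eqref{psi-in-L2} is available throughout $[r_0,1)$. Once the dominant is in place, both parts follow from routine applications of Cauchy--Schwarz and dominated convergence.
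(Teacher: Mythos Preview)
Your proof is correct and follows essentially the same approach as the paper: for (a) you use the same device of fixing an auxiliary $r_1\in(r_0,1)$ and dominating via the $L^2$-membership of $f_{A,r_0}$ and $f_{A,r_1}$ (the paper bounds $|f_{A,r}|$ directly rather than $|t^{r-r_0}-1|$, but this is a cosmetic difference), and for (b) your reduction by change of variable, splitting at $u=1$, and the Cauchy--Schwarz estimate on $(1,+\infty)$ match the paper's argument line for line.
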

\medskip 
\begin{proof} 
For the first point, note that
\begin{equation}\label{eq:trick}
f_{A,r}(t)=t^{r-r_0}f_{A,r_0}(t) = t^{r-r_1}f_{A,r_1}(t) \qquad (r_0\leq r,r_1<1),
\end{equation}
which proves that $f_{A,r}(t)$ tends pointwise to $f_{A,r_0}(t)$ on $(0,+\infty)$, as $r\to r_0$. Moreover, if $r_1$ is such that $r<r_1<1$, then, using the two equalities in \eqref{eq:trick}  (depending whether $t<1$ or $t\geq1$), we easily check that 
\[
|f_{A,r}(t)|\leq |f_{A,r_0}(t)|+|f_{A,r_1}(t)|,\qquad t>0.
\]
Since the function $t\mapsto |f_{A,r_0}(t)|+|f_{A,r_1}(t)|$ is in $L^2((0,+\infty),\frac{dt}{t^{1-2\sigma_0}})$, an application of Lebesgue's theorem gives the result.
~\\
For the second point, by linearity and using a change of variable, it is sufficient to prove that the integral
\[
\int_0^{+\infty}\frac{|\psi(t)|}{t^{\sigma+r+1-\sigma_0}}\,dt
\]
is convergent if $\sigma_0+r_0-r<\sigma<\sigma_0+1-r$ which is equivalent to the convergence of 
\[
\int_0^{+\infty}\frac{|\psi(t)|}{t^{1+\gamma}}\,dt
\]
if  $r_0<\gamma<1$. On the one hand, $\psi(t)=tP(\log t)$ for $t\in (0,1)$ and we have 
\[
\int_0^1\frac{|\psi(t)|}{t^{1+\gamma}}\,dt=\int_0^1 \frac{|P(\log t)|}{t^{\gamma}}\,dt.
\]
This last integral is convergent if $\gamma<1$. On the other hand, using Cauchy--Schwarz inequality, we get 
\[
\int_1^{+\infty}\frac{|\psi(t)|}{t^{1+\gamma}}\,dt\leq \left(\int_1^{+\infty}\frac{|\psi(t)|^2}{t^{1+2r_0}}\,dt \right)^{1/2} \left(\int_1^{+\infty}\frac{dt}{t^{1+2\gamma-2r_0}} \right)^{1/2}.
\]
Now the first integral on the right hand side is finite by hypothesis and the second integral is finite if and only if $r_0<\gamma$, which concludes the proof.
\end{proof}
~\\
If $A=(\alpha,c)\in\mathcal S$, we let 
$$
g_A(s)=\sum_{j=1}^{\ell(\alpha)}c_j\alpha_j^s \quad (s\in\CC).
$$

\begin{Lem}\label{Lem:mellin--transform-nnadmi}~Let $r_0\leq r<1$ and $s \in \CC$ with $ \sigma_0+r_0-r<\Re(s) <\sigma_0+1-r.$
Then
\begin{equation}\label{eq:mellin-transform-fA}
\widehat{f_{A,r}}(s)=-L(s+r-\sigma_0)\widehat{\varphi}(s+r-\sigma_0)g_A(s+r-\sigma_0).
\end{equation}
If $r=r_0$ and $s=\sigma_0+it$, the equality \eqref{eq:mellin-transform-fA} holds for almost every $t \in \RR$.
\end{Lem}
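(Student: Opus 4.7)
The plan is to reduce the statement to a single Mellin-type identity for $\psi$ and establish it by splitting the integration at $u=1$. By linearity in $A$, both sides of \eqref{eq:mellin-transform-fA} are sums over $1\le j\le \ell(A)$, so it suffices to treat $\ell(A)=1$, say $A=(\alpha,c)$. The change of variable $u = \alpha/t$ gives
\[
\widehat{f_{A,r}}(s) = c\,\alpha^{s+r-\sigma_0}\int_0^\infty \psi(u)\,u^{-(s+r-\sigma_0)-1}\,du,
\]
so, writing $z = s+r-\sigma_0$, the lemma is equivalent to the identity
\[
\int_0^\infty \psi(u)\,u^{-z-1}\,du = -L(z)\,\hat\varphi(z) \qquad (r_0 < \Re(z) < 1).
\]

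To prove this identity I split the integral at $u=1$. On $(0,1)$, formula \eqref{psi-u-petit} gives $\psi(u) = u\sum_{k=0}^{m_L-1}p_k(\log u)^k$, and the elementary identity $\int_0^1 u^{s-1}(\log u)^k\,du = (-1)^k k!/s^{k+1}$ (for $\Re(s)>0$), applied with $s=1-z$ and combined with \eqref{eq:defnH}, yields
\[
\int_0^1 \psi(u)\,u^{-z-1}\,du = -\sum_{k=0}^{m_L-1}\frac{k!\,p_k}{(z-1)^{k+1}} = -L(z)\hat\varphi(z) - H(z).
\]
It therefore suffices to show that $\int_1^\infty \psi(u)\,u^{-z-1}\,du = H(z)$ throughout the strip $r_0<\Re(z)<1$.

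For $\Re(z)>1$, absolute convergence permits me to split $\psi$ on $(1,\infty)$ into its two defining pieces. The identity $\int_1^\infty u^{-z}(\log u)^k\,du = k!/(z-1)^{k+1}$ gives
\[
\int_1^\infty u\sum_{k=0}^{m_L-1} p_k(\log u)^k\cdot u^{-z-1}\,du = \sum_{k=0}^{m_L-1}\frac{k!\,p_k}{(z-1)^{k+1}},
\]
and since $L(z)$ is absolutely convergent for $\Re(z)>1$, Fubini combined with the substitution $v=n/u$ yields
\[
\int_1^\infty \sum_{n<u} a_n\varphi(n/u)\cdot u^{-z-1}\,du = \sum_{n\ge 1} a_n n^{-z}\hat\varphi(z) = L(z)\hat\varphi(z).
\]
Subtracting these two lines and using \eqref{eq:defnH} gives the desired equality for $\Re(z)>1$. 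I then extend it to $\Re(z)>r_0$ by analytic continuation: Cauchy--Schwarz together with the defining property of $r_0$ in \eqref{psi-in-L2} yields
\[
\int_1^\infty |\psi(u)|\,u^{-\Re(z)-1}\,du \le \|\psi\|_{r_0}\left(\int_1^\infty u^{2r_0 - 2\Re(z) - 1}\,du\right)^{1/2} < \infty,
\]
so the left-hand side is holomorphic on $\Re(z)>r_0$; as $H$ is holomorphic on $\Pi_{\sigma_0}$ and the two agree on $\Re(z)>1$, unicity of analytic continuation gives equality throughout $\Re(z)>r_0$. Adding the two contributions produces the identity in the strip and finishes the case $r>r_0$.

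For the boundary case $r=r_0$ and $s=\sigma_0+it$, I invoke Lemma~\ref{Lem:fonction-fA-non-admissible}(a): as $r\to r_0^+$ we have $\|f_{A,r}-f_{A,r_0}\|\to 0$, so by the Mellin--Plancherel isomorphism onto $L^2(\sigma_0+i\RR)$ the transforms $\widehat{f_{A,r}}(\sigma_0+i\cdot)$ converge in $L^2(\RR)$, hence almost everywhere along a subsequence. Matching with the pointwise limit of the right-hand side of \eqref{eq:mellin-transform-fA} (continuous in $r$ at $r=r_0$ since $r_0>\sigma_0$ and $r_0+it\neq 1$) yields the identity almost everywhere on the line. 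The only slightly delicate point is the analytic continuation step above; the clean split at $u=1$ isolates the pole of $L\hat\varphi$ entirely into the $(0,1)$-piece, which is what makes the continuation transparent and avoids any contour-shifting through the pole at $z=1$.
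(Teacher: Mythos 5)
Your proof is correct and follows essentially the same route as the paper: split the Mellin integral of $\psi$ at $u=1$, compute the $(0,1)$ piece from \eqref{psi-u-petit}, identify the $(1,\infty)$ piece with $H$ by analytic continuation from $\Re(z)>1$, and treat the case $r=r_0$ via Lemma~\ref{Lem:fonction-fA-non-admissible}(a), the Mellin--Plancherel isometry and almost everywhere convergence along a subsequence. The only cosmetic difference is that you re-derive the identity $\int_1^\infty\psi(u)\,u^{-z-1}\,du=H(z)$ directly (Fubini for $\Re(z)>1$ plus a Cauchy--Schwarz bound for the continuation), where the paper quotes \cite[Lemma 3.1]{dfmr} and gets analyticity of that partial Mellin transform from the $L^2$ membership of $\psi(1/t)\chi_{(0,1)}(t)$.
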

\begin{proof}
For $r_0<\Re(s)<1$, we claim that 
\begin{equation}\label{eq:Mellintransformpsi(1surt)}
-L(s)\hat\varphi(s)=\int_0^{+\infty}\psi\left(\frac{1}{t}\right)t^{s-1}dt.
\end{equation}
Indeed , on one hand, by \cite[Lemma 3.1]{dfmr}, we have 
\begin{equation}\label{eq:cle-Lem-mellin--transform-nnadmi}
H(s)=\int_0^1\psi\left(\frac{1}{t}\right)t^{s-1}\,dt\qquad (\Re(s)>1),
\end{equation}
where $H$ is the analytic function on $\Pi_{\sigma_0}$ introduced in \eqref{eq:defnH}. 
Since the function $t\mapsto \psi(\frac{1}{t})\chi_{[0,1]}(t)$ belongs to $L^2_*((0,1),
\frac{dt}{t^{1-2r_0}})$, the function $s\mapsto\int_0^1 \psi\left(\frac{1}
{t}\right)t^{s-1}\,dt$ is analytic on $\Pi_{r_0}$. Hence,the analytic continuation 
principle implies that the equality \eqref{eq:cle-Lem-mellin--transform-nnadmi} is 
satisfied for all $s\in\Pi_{r_0}$. On the other hand, by an easy induction argument, we 
have
\[
\frac{1}{(k-1)!}\int_0^1(\log t)^{k-1}t^{-s}\,dt=-\frac{1}{(s-1)^k}\qquad (\Re(s)<1).
\]
Thus, using \eqref{psi-u-petit}, we get
\begin{equation}\label{eq:psi(1surt)2}
\int_0^1\psi(t)t^{-s-1}\,dt=-\sum_{k=0}^{m_L-1}\frac{k!p_k}{(s-1)^{k+1}}\qquad (\Re(s)<1).
\end{equation}
Using \eqref{eq:cle-Lem-mellin--transform-nnadmi} and \eqref{eq:psi(1surt)2}, we obtain,  for $r_0<\Re(s)<1$, 
\begin{eqnarray*}
-L(s)\hat\varphi(s)&=&H(s)-\sum_{k=0}^{m_L-1}\frac{k!p_k}{(s-1)^{k+1}}\\
&=&\int_0^1\psi\left(\frac{1}{t}\right)t^{s-1}\,dt+\int_0^1\psi(t)t^{-s-1}\,dt,
\end{eqnarray*}
which yields \eqref{eq:Mellintransformpsi(1surt)}. Therefore 
\begin{eqnarray*}
-L(s)\hat\varphi(s)g_A(s)&=&\int_0^{+\infty}\psi\left(\frac{1}{t}\right) \sum_{j=1}^{\ell(\alpha)}c_j\alpha_j^s t^{s-1}\,dt\\
&=&\int_0^{+\infty}\sum_{j=1}^{\ell(\alpha)}c_j \psi\left(\frac{\alpha_j}{t}\right)t^{s-1}\,dt\\
&=&\int_0^{+\infty} f_{A,r}(t)t^{s+\sigma_0-r-1}\,dt.
\end{eqnarray*}
Lemma~\ref{Lem:fonction-fA-non-admissible} (b) implies that the last integral is absolutely convergent if $r_0<\Re(s)<1$. Hence
\begin{equation}\label{eq2:mellin-transform-fA}
-L(s)\hat\varphi(s)g_A(s)= \widehat{f_{A,r}}(s+\sigma_0-r),\qquad r_0<\Re(s)<1.
\end{equation}
We conclude the proof of \eqref{eq:mellin-transform-fA} using the change of variable $s\longmapsto s-\sigma_0+r_0$.
~\\
For the second part, take $r$ such that $r_0<r<1$. Now, we know that
\begin{equation}\label{eq:toujours-pas-didee}
\widehat{f_{A,r}}(\sigma_0+it)=-L(r+it)\widehat{\varphi}(r+it)g_A(r+it).
\end{equation}  
By Lemma~\ref{Lem:fonction-fA-non-admissible} (a), the sequence $\widehat{f_{A,r_n}}$ tends to $\widehat{f_{A,r_0}}$ in $L^2(\sigma_0+i\RR)$, for any sequence $(r_n)_n$ tending to $r_0$ (since the Mellin transform is an isometry from $L^2((0,\infty),dt/t^{1-2\sigma_0})$ onto $L^2(\sigma_0+i\RR)$). Using a classical result, this sequence $(r_n)_n$ can be chosen so that 
\[
\lim_{n\to +\infty}\widehat{f_{A,r_n}}(\sigma_0+it)=\widehat{f_{A,r_0}}(\sigma_0+it)
\]
for almost all $t\in\RR$. The equation \eqref{eq:toujours-pas-didee} is now sufficient to complete the proof. \end{proof}
\medskip
~\\
We need to fix some other notation. If $r \in \RR$ and $\lambda\in\Pi_r$, we denote 
by $k_{\lambda,r}$ (respectively by $b_{\lambda,r}$) the reproducing kernel of $H^2(\Pi_r)$ (respectively the elementary Blaschke factor of $H^2(\Pi_r)$) corresponding to the point $\lambda$. In others words, we have for $k_{\lambda,r}$
\begin{equation}\label{eq:nr-H2r}
k_{\lambda,r}(s)=\frac{1}{2\pi}\frac{1}{s-2r+\bar\lambda},\qquad s\in\Pi_r,
\end{equation}
and
\begin{equation}\label{eq:bf-H2r}
f(\lambda)=\langle f,k_{\lambda,r}\rangle_2=\frac{1}{2\pi} \int_{-\infty}^{+\infty}\frac{f(r+it)}{\lambda-r-it}\,dt,
\end{equation}
for any function $f\in H^2(\Pi_r)$.
We also have for $b_{\lambda,r}$
\begin{equation}\label{eq:blaschke-elementaire-H2r}
b_{\lambda,r}(s)=\frac{s-\lambda}{s+\bar\lambda-2r},\qquad s\in\Pi_r,
\end{equation}
which is analytic and bounded on the closed half-plane $\overline{\Pi_r}$.  More precisely, we have $|b_{\lambda,r}(s)|\leq 1$ if $s\in \Pi_r$ and $|b_{\lambda,r}(s)|=1$ if $\Re(s) =r$. \medskip

\begin{Lem}\label{lem:appartenance-H2-multiplication-blaschke}
Let $A=(\alpha,c)\in\mathcal S$. Then for all $r$, $r_0\leq r<1$, the function 
$$
s\mapsto L(s)\widehat{\varphi}(s)g_A(s)b_{1,r}^{m_L}(s)
$$
belongs to $H^2(\Pi_{r})$.
\end{Lem}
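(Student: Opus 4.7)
The plan is to check the three defining properties of $H^2(\Pi_r)$-membership: analyticity on $\Pi_r$, $L^2$ boundary behavior on $\{\Re(s)=r\}$, and the supremum bound $\sup_{x>r}\int|G(x+it)|^2\,dt<+\infty$, where $G(s):=L(s)\hat\varphi(s)g_A(s)b_{1,r}^{m_L}(s)$. The structural observation driving the argument is that $b_{1,r}^{m_L}$ vanishes to order exactly $m_L$ at $s=1$ and so cancels the pole of $L$ of the same order, while remaining of modulus one on the critical line $\Re(s)=r$.

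For analyticity I would rewrite $L(s)b_{1,r}^{m_L}(s)=(s-1)^{m_L}L(s)/(s+1-2r)^{m_L}$. The numerator is analytic on $\Pi_{\sigma_0}$ by hypothesis, while the denominator has its unique root $s=2r-1$ outside $\overline{\Pi_r}$ since $r<1$. Combined with the analyticity of $\hat\varphi$ on $\Pi_{\sigma_0}\supset\Pi_r$ and the entirety of $g_A$, this gives analyticity of $G$ on $\Pi_r$.

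For the boundary values, since $|b_{1,r}(r+it)|=1$ on the line $\Re(s)=r$, we have $|G(r+it)|=|L(r+it)\hat\varphi(r+it)g_A(r+it)|$. I would then apply Lemma \ref{Lem:mellin--transform-nnadmi} at $s=\sigma_0+it$, which is legitimate because $\sigma_0+r_0-r<\sigma_0<\sigma_0+1-r$ for $r_0\leq r<1$ (with pointwise equality when $r>r_0$ and almost-everywhere equality when $r=r_0$), to identify the modulus with $|\widehat{f_{A,r}}(\sigma_0+it)|$. Since $f_{A,r}\in L^2((0,+\infty),dt/t^{1-2\sigma_0})$ and the normalized Mellin transform is a Plancherel-type isometry onto $L^2(\sigma_0+i\RR)$, the function $t\mapsto G(r+it)$ lies in $L^2(\RR)$.

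I expect the supremum bound to be the main (though still routine) hurdle. The finite-order assumption on $(s-1)^{m_L}L(s)$, the polynomial boundedness of $\hat\varphi$ on vertical strips in $\Pi_{\sigma_0}$ (read directly from its defining integral), the uniform bound $|g_A(s)|\leq\sum_j|c_j|\alpha_j^r$ on $\Pi_r$, and the boundedness of $b_{1,r}$ on $\Pi_r$ together show that $G$ is of finite order on $\Pi_r$. A standard Phragm\'en--Lindel\"of argument then guarantees that $G$ is of bounded characteristic on $\Pi_r$ and coincides there with the Poisson integral of its boundary datum on $\{\Re(s)=r\}$; the $L^2$-contraction property of the Poisson kernel finally yields $\sup_{x>r}\int|G(x+it)|^2\,dt\leq\int|G(r+it)|^2\,dt<+\infty$, i.e.\ $G\in H^2(\Pi_r)$.
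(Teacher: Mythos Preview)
Your analyticity and boundary-$L^2$ steps are fine. The gap is in the third step: the assertion that ``finite order on $\Pi_r$ together with $L^2$ boundary values forces $G$ to coincide with the Poisson integral of its boundary datum'' is not correct, and no Phragm\'en--Lindel\"of principle by itself yields it. On the right half-plane $\Pi_0$ the function $G_0(s)=e^{s}/(s+1)$ is analytic, of finite order, of bounded characteristic (since $e^s$ lies in the Nevanlinna class), and has boundary modulus $|G_0(it)|=(1+t^2)^{-1/2}\in L^2(\RR)$; yet $\int_{\RR}|G_0(x+it)|^2\,dt=\pi e^{2x}/(x+1)\to\infty$, so $G_0\notin H^2(\Pi_0)$. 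What would be needed is membership in the Smirnov class $N^+(\Pi_r)$ (for which $N^+\cap\{L^2\text{ boundary values}\}=H^2$), or alternatively an $L^2$ bound on a \emph{second} vertical line so that a three-lines argument applies in the intervening strip together with a separate treatment of the far right half-plane. Neither is supplied, and neither follows from the listed ingredients.

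The paper avoids the issue entirely by a direct decomposition. One writes, via \eqref{eq:defnH},
\[
L(s)\hat\varphi(s)g_A(s)b_{1,r}^{m_L}(s)=\sum_{k=0}^{m_L-1}k!\,p_k\,\frac{b_{1,r}^{m_L}(s)}{(s-1)^{k+1}}\,g_A(s)\;-\;H(s)g_A(s)b_{1,r}^{m_L}(s),
\]
uses the already-established fact that $H\in H^2(\Pi_r)$ (from \cite{dfmr}) together with the boundedness of $g_A$ and $b_{1,r}^{m_L}$ on $\Pi_r$ to handle the second term, and for the first term observes that $|b_{1,r}(s)|\le 1$ gives
\[
\left|\frac{b_{1,r}^{m_L}(s)}{(s-1)^{k+1}}\right|\le\left|\frac{b_{1,r}^{k+1}(s)}{(s-1)^{k+1}}\right|=\frac{1}{|s+1-2r|^{k+1}},
\]
which is square-integrable on every vertical line $\Re(s)=x>r$ with a uniform bound. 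This gives $H^2(\Pi_r)$ membership without any appeal to growth principles or Poisson representations. If you want to salvage your route, the cleanest fix is to plug exactly this decomposition into your Step~3.
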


\begin{proof}
Recall that 
\[
L(s)\widehat{\varphi}(s)=\sum_{k=0}^{m_L-1}\frac{k!p_k}{(s-1)^{k+1}}-H(s),\qquad s\not=1,s\in\Pi_{\sigma_0},
\]
and according to the proof of Theorem 2.1 in \cite{dfmr}, the function $H$ belongs to $H^2(\Pi_{r})$. Therefore
\[
L(s)\widehat{\varphi}(s)g_A(s)b_{1,r}^{m_L}(s)=\sum_{k=0}^{m_L-1}k!p_k\frac{b^{m_L}_{1,r}(s)}{(s-1)^{k+1}}g_A(s)-H(s)g_A(s)b^{m_L}_{1,r}(s).
\]
The functions $g_A$ and $b^{m_L}_{1,r}$ are bounded on $\Pi_{r}$, hence the function $s\mapsto H(s)g_A(s)b^{m_L}_{1,r}(s)$ belongs to $H^2(\Pi_{r})$. So it is sufficient to prove that the function
$$
s\longmapsto \frac{b^{m_L}_{1,r}(s)}{(s-1)^{k+1}}
$$ 
belongs to $H^2(\Pi_{r})$  for every $0\leq k\leq m_L-1$. Using \eqref{eq:blaschke-elementaire-H2r} and the fact that $|b_{1,r}(s)|\leq 1$ for $s\in\Pi_{r}$, we have 
\[
\left|\frac{b_{1,r}^{m_L}(s)}{(s-1)^{k+1}}\right|\leq \left|\frac{b_{1,r}^k(s)}{(s-1)^{k+1}}\right|=\frac{1}{|s+1-2r|^{k+1}},
\]
and it is clear that 
\begin{eqnarray*}
\sup_{\sigma=\Re(s)>r}\left(\int_{-\infty}^{+\infty}\frac{dt}{|\sigma+1-2r+it|^{2(k+1)}}\right)&\leq& \int_{-\infty}^{+\infty}\frac{dt}{((1-r)^2+t^2)^{k+1}} \\
&<&+\infty,
\end{eqnarray*}
which proves that $s\longmapsto \frac{b^{m_L}_{1,r}(s)}{(s-1)^{k+1}}$ belongs to $H^2(\Pi_{r})$.
\end{proof}
~\\
We introduce now a function $u_{r,\lambda}$ of $L^2((0,\infty),dt/t^{1-2\sigma_0})$ which will be used to give explicit zero free regions in terms of the distance of $u_{r,\lambda}$ to the subspace $K_r$ (see \eqref{eq:defdelta} and Theorem~\ref{mauv_dist}). This function $u_{r,\lambda}$ plays the role of the function $w_\lambda$ in Theorem~\ref{sans_zeros_avec_conditions}.  
For $\lambda \in \Pi_{\sigma_0}$ and $t>0$, we define
$$
u_{r,\lambda}(t)= \left(1+\frac{A}{B}\right)^{m_L} t^{\overline{\lambda}-2\sigma_0} \chi_{(0,1)}(t)
 + Q_{r,\lambda}(\log t) t^{r-\sigma_0 -1} \chi_{(1,\infty)}(t) 
$$
with $A=2-2r$ and $B=r+\sigma_0-1-\overline{\lambda}$ and where $Q_{r,\lambda}$ is the polynomial defined by 
$$
Q_{r,\lambda}(t) = -  \sum_{j=0}^{m_L-1} \left(\sum_{k=0}^{m_L-1-j} \binom{m_L}{k} \left(\frac{A}{B}\right)^{m_L-k}\right) \frac{(-1)^{j} B^j}{j!} t^j .
$$
Note that for $0<t<1$, we have $u_{r,\lambda}(t)=(1+A/B)^{m_L}w_\lambda(t)$, but the 
function $u_{r,\lambda}$ is (contrary to the function $w_\lambda$) supported on the 
whole axis $(0,\infty)$. This is quite natural since $K_r$ is formed by functions which 
live on $(0,\infty)$ whereas functions of $K_r^\sharp$ vanish on $(1,\infty)$. 
Although the formulae defining $u_{r,\lambda}$ may appear a little bit complicated, 
this function is chosen so that its Mellin transform has the simple following form:
\begin{Lem}\label {lemme_polynome} For $2\sigma_0-\Re(\lambda)<\Re(s)<1+\sigma_0-r$, we have
$$
\widehat{u_{r,\lambda}}(s) = 2\pi \frac{k_{\lambda,\sigma_0}(s)}{b_{1,r}^{m_L}(s+r-\sigma_0)}.
$$
\end{Lem}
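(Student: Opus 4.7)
My plan is to compute $\widehat{u_{r,\lambda}}(s)$ directly by splitting the Mellin integral into its contributions over $(0,1)$ and $(1,\infty)$, and to organise the answer so that it matches a partial fraction decomposition of the target right-hand side.

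Introduce the shorthand $X:=s+r-\sigma_0-1$, so that $X-B=s+\bar\lambda-2\sigma_0$ and $X+A=s+1-r-\sigma_0$. The convergence conditions $2\sigma_0-\re(\lambda)<\re(s)<1+\sigma_0-r$ then translate to $\re(X-B)>0$ and $\re(X)<0$. Using \eqref{eq:nr-H2r} and \eqref{eq:blaschke-elementaire-H2r}, a direct computation shows that the claimed right-hand side equals
$$
\frac{1}{X-B}\left(\frac{X+A}{X}\right)^{m_L}=\frac{f(X)}{X-B},\qquad f(Z):=\Bigl(1+\tfrac{A}{Z}\Bigr)^{m_L}.
$$
Writing $f(X)/(X-B)=f(B)/(X-B)+(f(X)-f(B))/(X-B)$, it then suffices to show that the piece of $u_{r,\lambda}$ supported on $(0,1)$ has Mellin transform $f(B)/(X-B)$ and the piece on $(1,\infty)$ has Mellin transform $(f(X)-f(B))/(X-B)$. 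The first is immediate: the constant factor $(1+A/B)^{m_L}$ is exactly $f(B)$, and $\int_0^1 t^{s+\bar\lambda-2\sigma_0-1}\,dt=1/(X-B)$ because $\re(X-B)>0$.

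For the piece on $(1,\infty)$, the substitution $u=\log t$ turns the integral into $\int_0^\infty Q_{r,\lambda}(u)\,e^{Xu}\,du$, which converges since $\re(X)<0$; evaluating term by term with $\int_0^\infty u^j e^{Xu}\,du=j!\,(-1)^{j+1}X^{-j-1}$ produces a linear combination of the powers $X^{-(j+1)}$, $0\le j\le m_L-1$, whose coefficients come from the inner sums appearing in $Q_{r,\lambda}$. To recognise this combination as $(f(X)-f(B))/(X-B)$, I expand
$$
f(X)-f(B)=\sum_{k=1}^{m_L}\binom{m_L}{k}A^k\bigl(X^{-k}-B^{-k}\bigr)
$$
and apply the telescoping identity
$$
\frac{X^{-k}-B^{-k}}{X-B}=-\sum_{i=0}^{k-1}\frac{1}{X^{i+1}B^{k-i}},
$$
which follows from $X^k-B^k=(X-B)\sum_i X^{k-1-i}B^i$ after dividing by $(XB)^k$. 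Swapping the order of summation and reindexing via $\ell=m_L-k$ turns the resulting double sum into exactly the inner sums prescribed in the definition of $Q_{r,\lambda}$; the factors $(-1)^j B^j/j!$ there are built precisely to absorb the signs and factorials produced by the Gamma integrals. The main obstacle is therefore purely combinatorial bookkeeping, after which summing the two contributions yields $f(X)/(X-B)$, the claimed identity.
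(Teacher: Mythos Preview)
Your argument is correct, but it proceeds in the opposite direction from the paper. The paper does not compute $\widehat{u_{r,\lambda}}$ at all: instead it takes the function $z\mapsto 2\pi\,k_{\lambda,\sigma_0}(z)/b_{1,r}^{m_L}(z+r-\sigma_0)$, writes down its inverse Mellin transform as a contour integral along $\re(z)=\delta$, and then shifts the contour to the left (for $0<t<1$) or to the right (for $t>1$), collecting the residues at the simple pole $z=2\sigma_0-\bar\lambda$ and at the order-$m_L$ pole $z=\sigma_0+1-r$ respectively. Your approach, by contrast, is a forward computation: you integrate $u_{r,\lambda}(t)t^{s-1}$ directly, recognise the two pieces via the algebraic decomposition $f(X)/(X-B)=f(B)/(X-B)+(f(X)-f(B))/(X-B)$, and then match the $(1,\infty)$ contribution to the difference quotient by expanding $(1+A/Z)^{m_L}$ and using the telescoping identity for $(X^{-k}-B^{-k})/(X-B)$. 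Your route is more elementary in that it avoids contour shifting and residue calculus entirely, and it makes transparent exactly which elementary integrals $\int_0^\infty u^j e^{Xu}\,du$ are responsible for each term of $Q_{r,\lambda}$; the paper's route, on the other hand, explains \emph{why} $u_{r,\lambda}$ is defined by that particular formula in the first place (it is simply what the residue computation produces), and handles the $(1,\infty)$ piece in one stroke rather than through a double sum and reindexing. Either way the combinatorics you allude to is the same: both methods ultimately identify the coefficients of $Q_{r,\lambda}$ with the Laurent coefficients of $(X+A)^{m_L}/(X^{m_L}(X-B))$ at $X=0$.
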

\begin{proof} We compute the inverse Mellin transform of the right hand side  which is defined by 
$$
\frac{1}{2i\pi} \int_{Re(z)=\delta} 2\pi \frac{k_{\lambda,\sigma_0}(z)}{b_{1,r}^{m_L}(z+r-\sigma_0)} t^{-z} dz
$$
where $\delta$ is any real number such that $2\sigma_0-\Re(\lambda) < \delta < 1+\sigma_0-r$.\smallskip 
~\\
If $0<t<1$, we  can push the line of integration to the left. Then we catch the residue of the function $2\pi\frac{k_{\lambda,\sigma_0}(z)}{b_{1,r}^{m_L}(z+r-\sigma_0)} t^{-z}$ at the simple pole $z=2\sigma_0-\overline{\lambda}$ and we obtain $u_{r,\lambda}(t)$ for $0<t<1$.\smallskip
~\\
If $t>1$, we can push the line of integration to the right. We catch the residue of the function at the pole $z=\sigma_0+1-r$ which is of order $m_L$. After a direct computation, we obtain $u_{r,\lambda}(t)$ for $t>1$.
\end{proof}

\section{Zero-free regions}\label{sec:zero-free-discs}
For $A\in\mathcal S$, we set
$$
h_{A,r}(s)=-\frac{1}{\sqrt{2\pi}}L(s+r-\sigma_0)\hat\varphi(s+r-\sigma_0)g_A(s+r-\sigma_0)b_{1,r}^{m_L}(s+r-\sigma_0),
$$
and according to Lemma \ref{lem:appartenance-H2-multiplication-blaschke}, the function  $h_{A,r}$ belongs to $H^2(\Pi_{\sigma_0})$. Furthermore, $h_{A,r}$ is analytic in $\Pi_{2\sigma_0-r}$.
\begin{Prop}\label{sans_zeros} Let $r_0$ satisfying \eqref{psi-in-L2}, let $r_0\leq r<1$, and let $\lambda \in \Pi_{\sigma_0}$. Then $L$ does not vanish on 
$$
r-\sigma_0 +\left\{ \mu \in \CC \colon \left| \frac{ \mu -\lambda}{\mu+\overline{\lambda} -2\sigma_0}\right| < \sqrt{4\pi\big(\Re(\lambda)-\sigma_0\big)} \frac{|h_{A,r}(\lambda)|}{\Vert h_{A,r} \Vert_2 }\right\},
$$
for any $A\in\mathcal S$.
\end{Prop}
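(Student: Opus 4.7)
The plan is to reproduce the reproducing-kernel Hilbert space argument of Theorem~\ref{sans_zeros_avec_conditions} from \cite{dfmr}, but applied to the auxiliary function $h_{A,r}\in H^2(\Pi_{\sigma_0})$ instead of (the Mellin transform of) $f_{A,r}$. The point of the construction of $h_{A,r}$ is precisely that the Blaschke factor $b_{1,r}^{m_L}$ absorbs the pole of $L$ at $s=1$ (this is exactly what Lemma~\ref{lem:appartenance-H2-multiplication-blaschke} guarantees), so $h_{A,r}$ is a genuine Hardy space element no matter whether $A$ is admissible or not. Once we are inside $H^2(\Pi_{\sigma_0})$, the estimate is a formal consequence of the Cauchy--Schwarz inequality against the reproducing kernel together with a Blaschke division.

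Concretely, I would argue by contradiction. Suppose $\mu_0$ lies in the indicated disk and that $L(r-\sigma_0+\mu_0)=0$. Before anything else, I would observe that this disk is contained in $\Pi_{\sigma_0}$: indeed, the reproducing kernel inequality $|h_{A,r}(\lambda)|\le \|h_{A,r}\|_2\|k_{\lambda,\sigma_0}\|_2$ combined with $\|k_{\lambda,\sigma_0}\|_2^2 = 1/\bigl(4\pi(\Re(\lambda)-\sigma_0)\bigr)$ forces the radius to be at most one, so the Möbius image of the unit disk sits inside $\Pi_{\sigma_0}$. Next, $\mu_0\ne 1-r+\sigma_0$, because $L$ has a \emph{pole} at $s=1$; hence $b_{1,r}^{m_L}(\mu_0+r-\sigma_0)\ne 0$, and from the factorisation defining $h_{A,r}$ we conclude $h_{A,r}(\mu_0)=0$.

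From here the argument is standard. Since $h_{A,r}\in H^2(\Pi_{\sigma_0})$ vanishes at $\mu_0\in \Pi_{\sigma_0}$, the Blaschke division yields $h_{A,r}=b_{\mu_0,\sigma_0}\tilde h$ with $\tilde h\in H^2(\Pi_{\sigma_0})$ and $\|\tilde h\|_2=\|h_{A,r}\|_2$ (the inner factor has unit modulus on the boundary). Evaluating at $\lambda$ and applying $|\tilde h(\lambda)|\le \|\tilde h\|_2/\sqrt{4\pi(\Re(\lambda)-\sigma_0)}$, one obtains
$$
\left|\frac{\mu_0-\lambda}{\mu_0+\overline{\lambda}-2\sigma_0}\right| \;=\; |b_{\mu_0,\sigma_0}(\lambda)| \;\ge\; \sqrt{4\pi(\Re(\lambda)-\sigma_0)}\,\frac{|h_{A,r}(\lambda)|}{\|h_{A,r}\|_2},
$$
after using $|b_{\mu_0,\sigma_0}(\lambda)|=|b_{\lambda,\sigma_0}(\mu_0)|$. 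This contradicts the hypothesis that $\mu_0$ lies in the open disk, completing the proof.

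The only genuinely delicate point — and the step I would flag as the main potential obstacle — is checking that the pole of $L$ at $s=1$ does not break the implication \emph{$L(r-\sigma_0+\mu_0)=0 \Rightarrow h_{A,r}(\mu_0)=0$}. The clean way to do this is to note that the Blaschke factor has a \emph{zero} of order $m_L$ at $\mu=1-r+\sigma_0$, which is exactly the unique point where the factor $L(s+r-\sigma_0)$ has a pole, and they combine to give a bounded (hence $H^2$) function there; elsewhere the Blaschke factor is nonzero, so any zero of $L$ other than $s=1$ forces a zero of $h_{A,r}$. Everything else — the Blaschke division, the reproducing-kernel evaluation, the identification of $\|k_{\lambda,\sigma_0}\|_2$, and the Schwarz–Pick style rewriting of $|b_{\mu_0,\sigma_0}(\lambda)|$ — is classical Hardy-space material that I would invoke rather than redevelop.
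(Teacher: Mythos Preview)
Your argument is correct and is precisely the reproducing-kernel/Blaschke-division argument that the paper invokes by citing \cite[Corollary~2.3]{dfmr}; you have simply written out in full what the paper packages into that reference, including the verification that $\mu_0\ne 1-r+\sigma_0$ and that the Blaschke factor carries the zero of $h_{A,r}$.
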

~\\
\begin{remark} Before proving the proposition, we should recall that for $\lambda=a+ib\in\Pi_{\sigma_0}$ ($a>\sigma_0,b\in\RR$) and $R\in [0,1]$, the set
\[
\left\{\mu\in\CC:\left|\frac{\mu-\lambda}{\mu+\bar\lambda-2\sigma_0}\right|<R\right\}
\]
is the open (euclidean) disc whose center is $\Omega=\left(\frac{a+R^2(a-2\sigma_0)}{1-R^2},b\right)$ and radius is $\frac{2R(a-\sigma_0)}{1-R^2}$ if $R\in [0,1[$; if $R=1$ this set is the half-plane $\Pi_{\sigma_0}$.
\end{remark}
\begin{proof}[Proof of Proposition \ref{sans_zeros}.]
Using Lemma~\ref{lem:appartenance-H2-multiplication-blaschke} and following the proof of \cite[Corollary 2.3]{dfmr} we obtain that $L$ does not vanish on the disc
$$
r-\sigma_0 +\left\{ \mu \in \CC \colon \left| \frac{ \mu -\lambda}{\mu+\overline{\lambda} -2\sigma_0}\right| < \frac{|h_{A,r} (\lambda) |}{\Vert h_{A,r} \Vert_2 \Vert k_\lambda\Vert_2} \right\}
$$
where the norms $\Vert \cdot \Vert_2$ are relative to the space $H^2(\Pi_{\sigma_0})$ (see equation~\eqref{norm_H2}). It remains to note that $\Vert k_\lambda \Vert_2 = \left(4\pi\big(\Re(\lambda)-\sigma_0\big)\right)^{-1/2}$.
\end{proof}
\begin{remark}\label{remark_sans_zero}
For applications, in order to compute $\frac{|h_{A,r}(\lambda)|}{\Vert h_{A,r} \Vert_2 }$, it is useful to notice that $\Vert h_{A,r} \Vert_2 = \Vert f_{A,r}\Vert_2$.
Indeed using the fact that $h_{A,r}$ is not only in $H^2(\Pi_{\sigma_0})$ but is analytic on $\Pi_{2\sigma_0-r}$ and using that $|b_{1,r}^{m_L}(s)|=1$ if 
$\Re(s)=r$, we obtain 
\[
\Vert h_{A,r} \Vert_2^2 = \int_{-\infty}^{\infty} \left|\frac{1}{\sqrt{2\pi}} L(r +it)\hat\varphi(r+it)g_A(r+it)\right|^2 dt. 
\]
Here we used the classical fact (see \cite{duren}) for example) that, if a function $h$ is in $H^2(\Pi_{\sigma_0})$, then the limit $h(\sigma_0+it):=\displaystyle\lim_{\substack{\sigma\to\sigma_0\\>}}h(\sigma+it)$ exists for almost all $t\in\RR$ and 
\[
\|h\|_2^2=\int_{-\infty}^{+\infty}|h(\sigma_0+it)|^2\,dt.
\]
 
By Lemma~\ref{Lem:mellin--transform-nnadmi}, we get
\[
\Vert h_{A,r} \Vert_2^2 = \int_{-\infty}^{\infty} \left|\frac{1}{\sqrt{2\pi}} \widehat{f_{A,r}}(\sigma_0 +it) \right|^2 dt. 
\]
The result now follows since ${\mathcal M}$ is a unitary operator from the space $L^2((0,\infty),dt/t^{1-2\sigma_0})$ onto $L^2(\sigma_0+i\RR)$.
\end{remark}
~\\
For $\lambda \in \Pi_{\sigma_0}$, we let
\begin{equation}\label{eq:defdelta}
\delta_r(\lambda) = \dist(u_{r,\lambda}, K_r),
\end{equation}
where the distance is taken with respect to $L^2((0,\infty), dt/t^{1-2\sigma_0})$.

\begin{Thm}\label{mauv_dist} Let $r_0$ satisfying \eqref{psi-in-L2}, let $r_0\leq r<1$, and let $\lambda \in \Pi_{\sigma_0}$. Then $L$ does not vanish on $r-\sigma_0+D_r(\lambda)$ where 
$$
D_r(\lambda) := \left\{ \mu \in \CC \colon \left|\frac{\mu - \lambda}{\mu + \overline{\lambda}-2\sigma_0}\right| < \sqrt{1-2(\Re(\lambda) -\sigma_0)\delta_r^2(\lambda)} \right\} .
$$
\end{Thm}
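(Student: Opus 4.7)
The plan is to take the union of the zero-free discs produced by Proposition~\ref{sans_zeros} as $A$ ranges over $\mathcal{S}$, and to identify the resulting supremum with the quantity $1-2(\Re(\lambda)-\sigma_0)\delta_r^2(\lambda)$. The central identity I would establish first is
$$
h_{A,r}(\lambda)=\frac{1}{\sqrt{2\pi}}\langle f_{A,r},u_{r,\lambda}\rangle,
$$
where the inner product is taken in $L^2\bigl((0,\infty),dt/t^{1-2\sigma_0}\bigr)$. To obtain it I would use the reproducing kernel: $h_{A,r}(\lambda)=\langle h_{A,r},k_{\lambda,\sigma_0}\rangle_{H^2(\Pi_{\sigma_0})}$. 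On the line $\Re(s)=\sigma_0$, Lemma~\ref{Lem:mellin--transform-nnadmi} (used with $r=r_0$ if necessary, and then with the fact that $f_{A,r}(t)=t^{r-r_0}f_{A,r_0}(t)$) gives $h_{A,r}(s)=\mathcal{M}(f_{A,r})(s)\,b_{1,r}^{m_L}(s+r-\sigma_0)$, and the crucial point is that $|b_{1,r}^{m_L}(r+it)|=1$, so this Blaschke factor may be transferred into the conjugate, i.e.
$$
b_{1,r}^{m_L}(r+it)\,\overline{k_{\lambda,\sigma_0}(\sigma_0+it)}=\overline{\frac{k_{\lambda,\sigma_0}(\sigma_0+it)}{b_{1,r}^{m_L}(r+it)}}.
$$
By Lemma~\ref{lemme_polynome} the right-hand side is $\overline{\mathcal{M}(u_{r,\lambda})(\sigma_0+it)}/\sqrt{2\pi}$, and the Plancherel/unitarity identity for $\mathcal{M}$ yields the claimed formula.

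Combined with Remark~\ref{remark_sans_zero} ($\|h_{A,r}\|_2=\|f_{A,r}\|$), Proposition~\ref{sans_zeros} then gives $L$-zero freeness on $r-\sigma_0+D$, where $D$ is the pseudo-hyperbolic disc centred at $\lambda$ of radius
$$
R_A:=\sqrt{2(\Re(\lambda)-\sigma_0)}\,\frac{|\langle f_{A,r},u_{r,\lambda}\rangle|}{\|f_{A,r}\|}.
$$
Taking the union over $A\in\mathcal{S}$ and using monotonicity of pseudo-hyperbolic discs in their radius, the resulting zero-free region is the disc of radius $\sup_{A}R_A$. Since $K_r$ is the closure of $\Span\{f_{A,r}:A\in\mathcal{S}\}$, a standard Hilbert space computation (supremum of $|\langle f,u\rangle|/\|f\|$ over a dense subset of $K_r$ equals $\|P_{K_r}u_{r,\lambda}\|$) yields
$$
\sup_{A\in\mathcal{S}}\frac{|\langle f_{A,r},u_{r,\lambda}\rangle|}{\|f_{A,r}\|}=\|P_{K_r}u_{r,\lambda}\|=\sqrt{\|u_{r,\lambda}\|^2-\delta_r^2(\lambda)}.
$$

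To close the argument I would evaluate $\|u_{r,\lambda}\|^2$ via Lemma~\ref{lemme_polynome} and the unitarity of $\mathcal{M}$: on $\Re(s)=\sigma_0$, $|b_{1,r}^{m_L}(s+r-\sigma_0)|=1$, so
$$
\|u_{r,\lambda}\|^2=2\pi\int_{-\infty}^{+\infty}|k_{\lambda,\sigma_0}(\sigma_0+it)|^2\,dt=2\pi\,k_{\lambda,\sigma_0}(\lambda)=\frac{1}{2(\Re(\lambda)-\sigma_0)}.
$$
Substituting,
$$
\sup_{A}R_A^2=2(\Re(\lambda)-\sigma_0)\Bigl(\frac{1}{2(\Re(\lambda)-\sigma_0)}-\delta_r^2(\lambda)\Bigr)=1-2(\Re(\lambda)-\sigma_0)\delta_r^2(\lambda),
$$
which is exactly the squared radius appearing in $D_r(\lambda)$, proving the theorem. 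The main obstacle I anticipate is the careful justification of the identity $h_{A,r}(\lambda)=\frac{1}{\sqrt{2\pi}}\langle f_{A,r},u_{r,\lambda}\rangle$: one must check that Lemmas~\ref{Lem:mellin--transform-nnadmi} and \ref{lemme_polynome} both apply on the critical line $\Re(s)=\sigma_0$ (the former only a.e.\ when $r=r_0$), and that the transfer of the unimodular Blaschke factor into the conjugate is legitimate almost everywhere; once this is secured, every remaining step is a direct Hilbert space computation.
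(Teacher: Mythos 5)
Your proposal is correct and follows essentially the same route as the paper's proof: Proposition~\ref{sans_zeros} (with $\Vert h_{A,r}\Vert_2=\Vert f_{A,r}\Vert$), the unimodularity of $b_{1,r}^{m_L}$ on the line $\Re(s)=r$, Lemma~\ref{lemme_polynome}, Mellin--Plancherel, and a Hilbert-space projection argument, with the supremum over $A\in\mathcal S$ yielding the stated radius. The only difference is bookkeeping: you compute $\Vert P_{K_r}u_{r,\lambda}\Vert$ and $\Vert u_{r,\lambda}\Vert$ on the $L^2\big((0,\infty),dt/t^{1-2\sigma_0}\big)$ side, while the paper equivalently computes $\dist\big(k_{\lambda,\sigma_0},E_r\big)$ in $H^2(\Pi_{\sigma_0})$ and identifies it with $\tfrac{1}{\sqrt{2\pi}}\delta_r(\lambda)$ via the same unitarity.
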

\medskip
\begin{proof}
According to proposition~\ref{sans_zeros}, the function $L$ does not vanish on 
$$
r-\sigma_0+\left\{ \mu \in \CC \colon \left|\frac{\mu - \lambda}{\mu + \overline{\lambda}-2\sigma_0}\right| < R \right\}
$$
with
$$
R=\frac{1}{\Vert k_{\lambda,\sigma_0} \Vert_2} \sup_{A\in\mathcal S} \frac{|h_{A,r}(\lambda)|}{\Vert h_{A,r} \Vert_2}.
$$
~\\
It remains to prove that $R^2=1-2(\Re(\lambda) -\sigma_0)\delta_r^2(\lambda)$. For this, note that
\begin{eqnarray*}
R &=& \frac{1}{\Vert k_{\lambda,\sigma_0}\Vert_2} \sup_{A\in\mathcal S} \frac{|\langle h_{A,r} , k_{\lambda,\sigma_0}\rangle|}{\Vert h_{A,r} \Vert_2}\\
   &= & \frac{1}{\Vert k_{\lambda,\sigma_0}\Vert_2} \Vert P_{E_r} k _{\lambda,\sigma_0} \Vert_2
\end{eqnarray*}
where $E_r = \Span_{H^2(\Pi_{\sigma_0})} (h_{A,r} \colon A\in\mathcal S)$. Hence, with standard Hilbert space arguments, we get
\begin{equation}\label{eq:formule-rayon-abstrait}
R^2=1-\frac{\dist^2(k_{\lambda,\sigma_0},E_r)}{\Vert k_{\lambda,\sigma_0} \Vert_2^2}.
\end{equation}
Take any $A=\in\mathcal S$. Using $|b_{1,r}^{m_L}(r+it)|=1$, we obtain by Lemma \ref{Lem:mellin--transform-nnadmi} and Lemma \ref{lemme_polynome} that
\begin{multline*}
\Vert k_{\lambda,\sigma_0} - h_{A,r} \Vert_2^2 = \\  \int_{-\infty}^\infty \left| \frac{k_{\lambda,\sigma_0}(\sigma_0 +it)}{b_{1,r}^{m_L}(r+it)}+\frac{1}{\sqrt{2\pi}} L(r+it) \widehat{\varphi}(r+it) g_{A,r}(r+it) \right|^2 dt \\
=  \int_{-\infty}^\infty \left| \frac{1}{2\pi}\widehat{u_{r,\lambda}}(\sigma_0+it) - \frac{1}{\sqrt{2\pi}}\widehat{f_{A,r}}(\sigma_0+it)\right|^2 dt.
\end{multline*}
Since the Mellin transform is a unitary map, we deduce 
$$
\Vert k_{\lambda,\sigma_0} - h_{A,r} \Vert_2^2 =  \Vert \frac{1}{\sqrt{2\pi}}u_{r,\lambda} - f_{A,r} \Vert_2^2.
$$
Thus $\dist^2(k_{\lambda,\sigma_0},E_r) =  \frac{1}{2\pi}\dist^2(u_{r,\lambda}, K_r)$. Now the desired equality follows from \eqref{eq:formule-rayon-abstrait} and the fact that $\|k_{\lambda,\sigma_0}\|_2=(4\pi(\Re(\lambda)-\sigma_0)^{-1/2}$.
\end{proof}
\begin{remark}
If $u_{r,\lambda}\in K_r$ for some $\lambda\in\Pi_{\sigma_0}$ and some $r\geq r_0$, then it follows immediately from Theorem~\ref{mauv_dist} that $L$ does not vanish on $\Pi_r$ (indeed in this case $\delta_r(\lambda)=0$, and then the zero free region obtained in Theorem~\ref{mauv_dist} is exactely the half-plane $\Pi_r$). We do not know if the converse is true. 
\end{remark}

\begin{remark} The strategy of using the Blaschke factor in order to kill the pole of $L$ is also successful to prove the implication ${\it (2)} \Rightarrow {\it (1)}$ in Corollary \ref{beurling-nyman} (indeed this is the key implication to prove). For this, we can follow and generalize the idea given in \cite[Th\'eor\`eme II]{anne-TAMS}. Suppose that $L(s_0)=0$ for some $s_0\in\Pi_{r}$. Then consider 
\[
\mathfrak{u}(s)=\overline{b_{1,r}^{m_L}(s+r-\sigma_0)}k_{s_0,r}(s+r-\sigma_0), \quad (\Re(s)\geq \sigma_0).
\]
Note that $\mathfrak{u}$ has roughly the same flavor as $\widehat{u_{r,\lambda}}$. First we have 
\[
|\mathfrak{u}(\sigma_0+it)|=|b_{1,r}^{m_L}(r+it)| |k_{s_0,r}(r+it)|=|k_{s_0,r}(r+it)|, 
\]
and $t\longmapsto k_{s_0,r}(r+it)$ belongs to $L^2(\RR)$, so $\mathfrak{u}\in L^2(\sigma_0+i\RR)$. We claim that 
\begin{equation}\label{eq1:cle-BN-ss-admissibilite}
\int_{-\infty}^{+\infty}\widehat{f_{A,r}^\sharp}(\sigma_0+it)\overline{\mathfrak{u}(\sigma_0+it)}\,dt=0
\end{equation}
and that
\begin{equation}\label{eq2:cle-BN-ss-admissibilite}
\int_{-\infty}^{+\infty}k_{\lambda,\sigma_0}(\sigma_0+it)\overline{\mathfrak{u}(\sigma_0+it)}\,dt\not=0
\end{equation}
for every $\lambda\in\Pi_{\sigma_0}$.  

Let us prove \eqref{eq1:cle-BN-ss-admissibilite}. By Lemma~\ref{Lem:mellin--transform-nnadmi} and Lemma~\ref{lem:appartenance-H2-multiplication-blaschke}, we obtain 
\begin{multline}\label{equation_dep}
\int_{-\infty}^{+\infty} \widehat{ f_{A,r}}(\sigma_0+it)\overline{\mathfrak{u}(\sigma_0+it)}\,dt= \\
-\int_{-\infty}^{+\infty}L(r+it)\hat\varphi(r+it)g_A(r+it)b^{m_L}_{1,r}(r+it)\overline{k_{s_0,r}(r+it)}\,dt
\end{multline}
and the last integral is simply
$$
-\langle L\hat\varphi g_A b^{m_L}_{1,r},k_{s_0,r}\rangle_{H^2(\Pi_{r})} =-L(s_0)\hat\varphi(s_0)g_A(s_0) b^{m_L}_{1,r}(s_0) = 0.
$$
Finally let us prove \eqref{eq2:cle-BN-ss-admissibilite}. We have
$$
\int_{-\infty}^{+\infty}k_{\lambda,\sigma_0}(\sigma_0+it)\overline{\mathfrak{u}(\sigma_0+it)}\,dt = \langle h,k_{s_0,r}\rangle_{H^2(\Pi_{r})},
$$
where $h(s)=k_{\lambda,\sigma_0}(s+\sigma_0-r)b_{1,r}^{m_L}(s)$. Since $k_{\lambda,\sigma_0}\in H^2(\Pi_{\sigma_0})$, the function $s\longmapsto k_{\lambda,\sigma_0}(s+\sigma_0-r)$ belongs to $H^2(\Pi_{r})$, so $h\in H^2(\Pi_{r})$. We deduce
\[
\langle h,k_{s_0,r}\rangle_{H^2(\Pi_{r})}=h(s_0)=k_{\lambda,\sigma_0}(s_0+\sigma_0-r)b_{1,r}^{m_L}(s_0)\not=0,
\]
because $s_0\not=1$. That concludes the proof of \eqref{eq2:cle-BN-ss-admissibilite}.\medskip
~\\
Hence, we have constructed a function $\mathfrak{u}\in L^2(\sigma_0+i\RR)$ which is orthogonal to all functions $\widehat{f_{A,r}}$ but not orthogonal to $k_{\lambda,\sigma_0}$. Therefore 
\[
k_{\lambda,\sigma_0}\not\in\hbox{span}(\widehat{f_{A,r}} :A \in \mathcal S). 
\]
A direct calculation gives $\sqrt{2\pi}k_{\lambda,\sigma_0}=\mathcal M(t^{\overline{\lambda}-2\sigma_0}\chi_{(0,1)}(t))$ and since $\mathcal M$ is an isometry from $L^2((0,+\infty),\frac{du}{u^{1-2\sigma_0}})$ onto $L^2(\sigma_0+i\RR)$, we get that 
\[
t^{\bar\lambda-2\sigma_0}\chi_{(0,1)}\not\in \hbox{span}(f_{A,r} :A \in \mathcal S)=K_{r},
\]
which contradicts ${\it (2)}$ and concludes the proof. 
\end{remark}
\begin{remark} There are 3 distances involved through this work and  \cite{dfmr}, they are:
\begin{itemize}
\item The distance $\delta_r(\lambda) = \dist(u_{r,\lambda}, K_r)$;
\item The distance $d_r(\lambda) = \dist(t^{\overline{\lambda}-2\sigma_0}\chi_{(0,1)}, K_r)$;
\item The distance $d_r^\sharp(\lambda) := \dist(t^{\overline{\lambda}-2\sigma_0}\chi_{(0,1)}, K_r^\sharp)$.
\end{itemize}
We have:
$$
d_r(\lambda) \leq d_r^\sharp(\lambda) \leq C  d_r(\lambda)
$$
for some $C$. The first inequality is trivial and the second one is Corollary \ref{cor_maj-dist-sharp}. Unfortunately,
a direct comparison between $\delta_r(\lambda)$ and $d_r(\lambda)$ or $d_r^\sharp(\lambda)$ seems to be not obvious to obtain and so we are not able to compare directly the zero-free discs appearing in Theorem \ref{mauv_dist} and in Theorem \ref{sans_zeros_avec_conditions}. 
\end{remark}

\section{Explicit applications}

Let $L(s) = \sum_{n\geq 1} a_n n^{-s}$ be a Dirichlet series satisfying the conditions in section \ref{section_2}. 
We make use of Proposition \ref{sans_zeros} (and of Remark \ref{remark_sans_zero}). Let us recall that for $A=(\alpha,c) \in \mathcal S$ and $r_0\leq r<1$, the function $f_{A,r}$, defined by
$$
f_{A,r}(t) = t^{r-\sigma_0} \sum_{j=1}^{\ell(\alpha)} c_j \psi\left(\frac{\alpha_j}{t}\right),
$$ 
belongs to $L^2((0,\infty),dt/t^{1-2\sigma_0})$. For numerical applications, we have to estimate the norm of $f_{A,r}$. From the definition of $\psi$ in \eqref{psi}, we let
$$
\psi_1(u)=\res\left(L(s)\hat\varphi(s)u^s,s=1\right).
$$
The computations in \cite[equation (4.4)]{dfmr} give the following upper bound for the norm of $f_{A,r}(t)$ in $L^2((0,+\infty),dt/t^{1-2\sigma_0})$
$$
\Vert f_{A,r} \Vert \leq \sum_{j=1}^{\ell(\alpha)} |c_j| \alpha_j^r \left( \Vert \psi_1 \Vert_{L^2((0,1),\frac{du}{u^{1+2r}})} + \Vert \psi \Vert_r\right).
$$
Recall that by definition (see \eqref{eq:norme-psi}), we have $\Vert \psi \Vert_r = \Vert \psi \Vert_{L^2((1,\infty),\frac{du}{u^{1+2r}})}$.
Note that we also have the better theoretical upper bound
$$
\Vert f_{A,r} \Vert_{L^2((0,\infty),\frac{dt}{t^{1-2\sigma_0}})} \leq \Vert \psi \Vert_{L^2((0,\infty),\frac{du}{u^{1+2r}})}\sum_{j=1}^{\ell(\alpha)} |c_j| \alpha_j^r, $$ 
but it is less pratical for numerical applications. Hence, we deduce from Proposition \ref{sans_zeros} and Remark \ref{remark_sans_zero} that $L$ does not vanish in the disc
$$
r-\sigma_0 +\left\{ \mu \in \CC \colon \left|\frac{\mu-\lambda}{\mu+\overline{\lambda}-2\sigma_0}\right| < \sqrt{2(\Re(\lambda)-\sigma_0)} \;R \right\},
$$
where
$$
R= \frac{\left|L(\lambda+r-\sigma_0) \widehat{\varphi}(\lambda+r-\sigma_0)  b_{1,r}^{m_L}(\lambda+r-\sigma_0)\sum_{j=1}^{\ell(\alpha)} c_j \alpha_j^{\lambda+r-\sigma_0}\right|}{ \sum_{j=1}^{\ell(\alpha)} |c_j| \alpha_j^r \left(  \Vert \psi_1 \Vert_{L^2((0,1),\frac{du}{u^{1+2r}})} + \Vert \psi \Vert_r\right)}.
$$
In particular, for $\ell(\alpha)=1$ and $c_1=\alpha_1=1$, we obtain 
\begin{Prop} With the notation above, $L$ does not vanish on the disc
$$
r-\sigma_0 +\left\{ \mu \in \CC \colon \left|\frac{\mu-\lambda}{\mu+\overline{\lambda}-2\sigma_0}\right| < \sqrt{2(\Re(\lambda)-\sigma_0)}\; R \right\},
$$
where 
$$
R=
 \frac{\left|L(\lambda+r-\sigma_0) \widehat{\varphi}(\lambda+r-\sigma_0)  b_{1,r}^{m_L}(\lambda+r-\sigma_0)\right|}{ \Vert \psi_1 \Vert_{L^2((0,1),\frac{du}{u^{1+2r}})} + \Vert \psi \Vert_r}.
 $$
\end{Prop}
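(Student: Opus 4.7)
\bigskip
\noindent\emph{Proof plan.} The statement is exactly the specialization of the displayed general inequality above it to the parameters $\ell(\alpha)=1$ and $c_1=\alpha_1=1$, so the plan is simply to unwind how that general inequality itself is obtained from Proposition~\ref{sans_zeros} and Remark~\ref{remark_sans_zero}, and then to perform the substitution.

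First I would start from Proposition~\ref{sans_zeros}, applied to the sequence $A\in\mathcal S$ with $\ell(A)=1$, $c_1=\alpha_1=1$, so that $g_A(s)=1$ identically. With that choice, the function
$$
h_{A,r}(s)=-\frac{1}{\sqrt{2\pi}}\,L(s+r-\sigma_0)\hat\varphi(s+r-\sigma_0)\,b_{1,r}^{m_L}(s+r-\sigma_0)
$$
is, by Lemma~\ref{lem:appartenance-H2-multiplication-blaschke}, in $H^2(\Pi_{\sigma_0})$, and its evaluation at $\lambda\in\Pi_{\sigma_0}$ produces exactly the numerator of $R$ in the proposition (up to the harmless $1/\sqrt{2\pi}$ which will combine with $\sqrt{4\pi(\Re\lambda-\sigma_0)}$ from $\|k_{\lambda,\sigma_0}\|_2^{-1}$ to give the desired $\sqrt{2(\Re\lambda-\sigma_0)}$).

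Next I would deal with the denominator. Here I invoke Remark~\ref{remark_sans_zero}, which gives the isometric identity $\|h_{A,r}\|_2=\|f_{A,r}\|$, reducing the control of $\|h_{A,r}\|_2$ to the control of $\|f_{A,r}\|$ in $L^2((0,+\infty),dt/t^{1-2\sigma_0})$. Writing $\psi=\psi_1-\sum_{n<u}a_n\varphi(n/u)$ and splitting the integration at $1$, the triangle inequality combined with a standard change of variables (exactly as in \cite[eq.~(4.4)]{dfmr}, reproduced in the passage preceding the proposition) yields
$$
\|f_{A,r}\|\leq \sum_{j=1}^{\ell(\alpha)}|c_j|\,\alpha_j^{\,r}\bigl(\|\psi_1\|_{L^2((0,1),du/u^{1+2r})}+\|\psi\|_r\bigr).
$$
For the special choice $\ell(\alpha)=1$, $c_1=\alpha_1=1$, both sums $\sum_j c_j\alpha_j^{\lambda+r-\sigma_0}$ and $\sum_j |c_j|\alpha_j^{\,r}$ collapse to $1$, so the upper bound on $\|f_{A,r}\|$ becomes exactly $\|\psi_1\|_{L^2((0,1),du/u^{1+2r})}+\|\psi\|_r$, which is the denominator of $R$.

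Finally, plugging these two ingredients into the zero-free disc furnished by Proposition~\ref{sans_zeros} and using $\|k_{\lambda,\sigma_0}\|_2=(4\pi(\Re\lambda-\sigma_0))^{-1/2}$ gives precisely the claimed disc. There is no genuine obstacle here: every nontrivial analytic input (the $H^2$-membership via the Blaschke factor, the Mellin identification, the isometric equality $\|h_{A,r}\|_2=\|f_{A,r}\|$) has already been established in Section~\ref{sec:techniques-explicit} and Section~\ref{sec:zero-free-discs}; the only care required is the bookkeeping of the constants $1/\sqrt{2\pi}$, $\sqrt{4\pi(\Re\lambda-\sigma_0)}$, and the factor $|b_{1,r}^{m_L}(\lambda+r-\sigma_0)|$ which is retained rather than bounded by $1$ in order to get the sharp numerator.
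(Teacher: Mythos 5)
Your proposal is correct and follows exactly the route the paper takes: the paper gives no separate argument for this Proposition, since it is precisely the $\ell(\alpha)=1$, $c_1=\alpha_1=1$ specialization of the displayed zero-free disc obtained from Proposition~\ref{sans_zeros}, Remark~\ref{remark_sans_zero} (i.e.\ $\Vert h_{A,r}\Vert_2=\Vert f_{A,r}\Vert$), and the norm bound $\Vert f_{A,r}\Vert\leq\sum_j|c_j|\alpha_j^r(\Vert\psi_1\Vert_{L^2((0,1),du/u^{1+2r})}+\Vert\psi\Vert_r)$ quoted from \cite[eq.~(4.4)]{dfmr}. Your bookkeeping of the constants ($1/\sqrt{2\pi}$ against $\sqrt{4\pi(\Re(\lambda)-\sigma_0)}$) and the implicit observation that replacing $\Vert f_{A,r}\Vert$ by an upper bound only shrinks the disc are exactly what is needed.
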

\subsection{Zero-free discs for $\zeta$}
We  apply the proposition above to the case of the Riemann zeta function $\zeta(s)$.
~\\
We choose $\sigma_0=0$, $\varphi(t)=(1-t)^{-\sigma_1}\chi_{(0,1)}$ with 
$$
\sigma_1<1/2 \mbox{ and } \max(0,\sigma_1) < r < 1.
$$ 
~\\
We have
$$
\widehat{\varphi}(s)= \frac{\Gamma(s)\Gamma(1-\sigma_1)}{\Gamma(1+s-\sigma_1)}.
$$
In \cite[Theorem 7.2] {dfmr}, it is obtained 
$$
\Vert \psi \Vert_r^2  \leq \frac{1}{2r} + C(\sigma_1) \zeta(1+2(r-\sigma_1)),
$$
where 
$$
C(\sigma_1) =\frac{1}{1-2\sigma_1}+\frac{1}{(1-\sigma_1)^2(3-2\sigma_1)}+\frac{\varepsilon_1}{(1-\sigma_1)^2},
$$
with $\varepsilon_1=1$ if $\sigma_1\geq 0$ and $\varepsilon_1=-1$ if $\sigma_1<0$. Furthermore, in our case $\psi_1(u)=u/(1-\sigma_1)$ so 
$$
\Vert \psi_1\Vert_{L^2((0,1),\frac{du}{u^{1+2r}})}^2 = \frac{1}{(1-\sigma_1)^2(2-2r)}.
$$
~\\
Hence
\begin{Thm} The Riemann zeta function does not vanish in the disc
$$
r+ \left\{\mu \in \CC \colon \left|\frac{\mu-\lambda}{\mu+\bar{\lambda}}\right| < F(\lambda,r,\sigma_1) \right\},
$$
where 
\begin{multline*}
F(\lambda,r,\sigma_1)= \\ 
\frac{\sqrt{2\Re(\lambda)} \left| \Gamma(\lambda+r) \Gamma(1-\sigma_1) \zeta(\lambda+r)(\lambda+r-1) \right|}{ \left(C(r,\sigma_1)+\frac{1}{(1-\sigma_1)\sqrt{2-2r}}\right) \big|\Gamma(\lambda+r+1-\sigma_1)(\lambda-r+1) \big|},
\end{multline*}
~\\
with $C(r,\sigma_1)=\sqrt{1/(2r)+C(\sigma_1)\zeta(1+2r-2\sigma_1)}$.
\end{Thm}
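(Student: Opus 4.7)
The plan is to apply the preceding Proposition directly, with $L=\zeta$, $\sigma_0=0$, $m_L=1$, and $\varphi(t)=(1-t)^{-\sigma_1}\chi_{(0,1)}(t)$ for $\sigma_1<1/2$ and $\max(0,\sigma_1)<r<1$. The whole proof is essentially bookkeeping: one has to compute each ingredient entering the radius $R$ of the Proposition and recognize the resulting expression as $F(\lambda,r,\sigma_1)$.

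First, I would check that the hypotheses of Section~\ref{section_2} are met for this choice. The function $\zeta$ extends meromorphically to $\CC$ with a unique simple pole at $s=1$ and is of finite order on any right half-plane, so we may take $\sigma_0=0$ and $m_L=1$. The function $\varphi$ is supported on $[0,1]$, bounded on compact subsets of $(0,1)$, satisfies $\varphi(x)=O(1)=O(x^{-\sigma_0})$ as $x\to 0$, and $\varphi(x)=O((1-x)^{-\sigma_1})$ as $x\to 1^-$. The Mellin transform of $\varphi$ is computed from the Beta integral:
$$
\widehat{\varphi}(s)=\int_0^1(1-t)^{-\sigma_1}t^{s-1}\,dt=B(s,1-\sigma_1)=\frac{\Gamma(s)\Gamma(1-\sigma_1)}{\Gamma(s+1-\sigma_1)}.
$$

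Next, I would compute the Blaschke factor and the residue function $\psi_1$. Since $m_L=1$, we have $b_{1,r}^{m_L}(s)=\frac{s-1}{s+1-2r}$, which evaluated at $s=\lambda+r-\sigma_0=\lambda+r$ gives $\frac{\lambda+r-1}{\lambda-r+1}$. The residue $\psi_1(u)=\res(\zeta(s)\widehat{\varphi}(s)u^s,s=1)$ is obtained from the simple pole of $\zeta$ at $s=1$ (residue $1$) multiplied by $\widehat{\varphi}(1)=\Gamma(1-\sigma_1)/\Gamma(2-\sigma_1)=1/(1-\sigma_1)$; hence $\psi_1(u)=u/(1-\sigma_1)$, which yields
$$
\|\psi_1\|^2_{L^2((0,1),du/u^{1+2r})}=\frac{1}{(1-\sigma_1)^2}\int_0^1 u^{1-2r}\,du=\frac{1}{(1-\sigma_1)^2(2-2r)}.
$$

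For the remaining norm $\|\psi\|_r$, I would invoke \cite[Theorem 7.2]{dfmr}, which yields
$$
\|\psi\|_r^2\leq \frac{1}{2r}+C(\sigma_1)\zeta\bigl(1+2(r-\sigma_1)\bigr),
$$
i.e.\ $\|\psi\|_r\leq C(r,\sigma_1)$ with the $C(r,\sigma_1)$ of the statement. Consequently the denominator $\|\psi_1\|_{L^2((0,1),du/u^{1+2r})}+\|\psi\|_r$ in the Proposition is bounded above by $C(r,\sigma_1)+\frac{1}{(1-\sigma_1)\sqrt{2-2r}}$.

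Finally, I would substitute each of these ingredients into the disc provided by the Proposition (with $\sigma_0=0$, so $\sqrt{2(\Re\lambda-\sigma_0)}=\sqrt{2\Re\lambda}$ and the centered translation is by $r-\sigma_0=r$). Grouping the terms $\zeta(\lambda+r)$, $\Gamma(\lambda+r)\Gamma(1-\sigma_1)/\Gamma(\lambda+r+1-\sigma_1)$, and $(\lambda+r-1)/(\lambda-r+1)$ in the numerator and replacing the denominator by the uniform bound just obtained recovers exactly $F(\lambda,r,\sigma_1)$. The main point is not any mathematical obstacle but the careful identification of each factor; the only nontrivial input is the bound on $\|\psi\|_r$, which is imported from \cite{dfmr}.
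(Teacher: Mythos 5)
Your proposal is correct and follows the same route as the paper: apply the explicit Proposition (the $\ell(\alpha)=1$, $c_1=\alpha_1=1$ case of Proposition~7.1 together with Remark~\ref{remark_sans_zero}) with $\sigma_0=0$, $m_L=1$, $\widehat{\varphi}(s)=\Gamma(s)\Gamma(1-\sigma_1)/\Gamma(s+1-\sigma_1)$, $\psi_1(u)=u/(1-\sigma_1)$, and the bound $\|\psi\|_r\leq C(r,\sigma_1)$ imported from \cite[Theorem 7.2]{dfmr}. All the ingredient computations (the Beta integral, the Blaschke factor $(\lambda+r-1)/(\lambda-r+1)$, the norm of $\psi_1$, and the correct direction of the upper bound in the denominator) check out, so nothing further is needed.
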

~\\
It seems to be more convenient than the zero-free discs that are given in \cite[Corollary 7.4]{dfmr} (at least, we do not have to optimize any more the choice of $A \in \mathcal S^\sharp$). As an example, taking 
$\lambda=0.01+50i$, $r=0.49$ and $\sigma_1=0.4$ (these are the same values taken in the numerical example in \cite{dfmr}), we obtain that $\zeta$ does not vanish in the disc of center $\frac{1}{2}+50i$ and radius $1.49\times 10^{-5}$ (remark that it is a little bit better than in \cite{dfmr} in which the radius is $5.13\times 10^{-6}$). Note also that we did not optimize the choices of the parameters $\lambda, r$ and $\sigma_1$(and indeed, the choices of the parameters are not the best in order to get a zero-free region around $\frac{1}{2} + 50i$).   


\bibliographystyle{alpha}
\bibliography{bibli}

\end{document}